\documentclass[a4paper, 12pt]{article}
\usepackage[english]{babel}
\usepackage{amsmath,amssymb,amsthm,xcolor,latexsym,tabularx,stmaryrd}
\usepackage{hyperref}
\usepackage{comment}
\usepackage{a4wide}
\newcommand{\assign}{:=}
\newcommand{\mathd}{\mathrm{d}}

\newcommand{\tmop}[1]{\ensuremath{\operatorname{#1}}}

\newcommand{\NP}[1]{{\color{red}NP: #1}}

\newtheorem{theorem}{Theorem}[section]
\newtheorem*{theorem*}{Theorem}
\newtheorem{corollary}[theorem]{Corollary}
\newtheorem{lemma}[theorem]{Lemma}
\newtheorem{remark}[theorem]{Remark}
\newtheorem{definition}[theorem]{Definition}

\def\R{\mathbb{R}}

\def\P{\mathbb{P}}
\def\E{\mathbb{E}}
\def\T{\mathbb{T}}

\def\cC{\mathcal{C}}

\def\cC{\mathcal{C}}

\def\cF{\mathcal{F}}

\def\cL{\mathcal{L}}

\begin{document}

\title{Renormalization destroys a finite time bifurcation in the $\Phi^4_2$ equation}

\author{
  Alexandra Blessing Neam\c{t}u \thanks{University of Konstanz, Department of Mathematics and Statistics. Email: alexandra.blessing@uni-konstanz.de}
  \and
  Nicolas Perkowski \thanks{Institut f\"ur Mathematik, Freie Universit\"at Berlin and Max Planck Institute for Mathematics in the Sciences, Leipzig. Email: perkowski@math.fu-berlin.de}
  \and
  Chara Zhu \thanks{Institut f\"ur Mathematik, Freie Universit\"at Berlin. Email: xiaohan.zhu@fu-berlin.de }
}

\maketitle

\begin{abstract}
We study the singular $\Phi^4_2$ equation at a pitchfork bifurcation of the underlying deterministic dynamics.~To this aim, we linearize the SPDE along its stationary solution and show that the support of its finite-time Lyapunov exponents (FTLEs) is the real line, regardless of the bifurcation parameter and in sharp contrast to the non-singular $\Phi^4_1$ equation.~The proof relies on a support theorem for the stationary solution and its renormalized square. 
\end{abstract}

\section{Introduction and statement of the main result}

In this work we investigate finite-time Lyapunov exponents (FTLEs)\footnote{See Definition~\ref{def:ftle} for the definition of FTLEs in our setting.} for the $\Phi^4_2$ equation on the two-dimensional torus $\T^2$ 
given by 
\begin{align}\label{eq:phi} 
\begin{cases}\partial_t \Phi = \Delta \Phi -  :\Phi^3 : + \alpha \Phi + \xi, \\
\Phi(0)=\Phi_0,
\end{cases}
\end{align}
where $\xi$ denotes space-time white noise and the initial condition $\Phi_0$ belongs to a space of distributions. The term $\Phi^3 $ is subject to renormalization, since otherwise \eqref{eq:phi} is not well-posed in two dimensions. More precisely, convolving the noise with a smooth mollifier $\rho_\delta$, we obtain a family of equations 
\begin{align}\label{eq:phi} 
\begin{cases}\partial_t \Phi_\delta = \Delta \Phi_\delta - (\Phi^3_\delta-3 C_\delta \Phi_\delta) + \alpha \Phi_\delta   + \xi_\delta, \\
\Phi_\delta(0)=\Phi_0\star \rho_\delta,
\end{cases}
\end{align}
where the constants $C_\delta\to \infty$ as $\delta\to 0$.
Our goal is to analyze the FTLEs for the limit $\Phi=\lim\limits_{\delta\to 0} \Phi_\delta$ viewed as a stochastic flow taking values in a H\"older-Besov space of negative regularity.

More precisely, we aim to detect a phase transition / change of stability depending on the values of the parameter $\alpha$ for \eqref{eq:phi}.~These provide local expansion or contraction rates for the solutions of an SPDE starting from different initial data, indicating that nearby solutions are close to each other or tend to separate.~The motivation to quantity transitions for  SPDEs using FTLEs originates from the work of Crauel-Flandoli~\cite{CrFl:98} who show that the trajectories of the SDE
\begin{align}\label{sde} \text{d} x = (\alpha x -x^3)~\text{d}t +\sigma \text{d}W 
\end{align}
synchronize to a unique random fixed point, for any choice of $\alpha\in\R$ and noise intensity $\sigma>0$, inferring that additive noise destroys the deterministic pitchfork bifurcation occurring in $\alpha = 0$. More precisely, they show that the top asymptotic Lyapunov exponent is always negative.~However,
Callaway et al.~\cite{C} challenged this perspective by measuring local stability for trajectories of
the SDE~\eqref{sde} on finite time scales, using finite-time Lyapunov exponents. They proved that, whereas all FTLEs are negative for $ \alpha < 0,$ there is always a positive
probability to observe positive FTLEs for $\alpha>0$.~This probability was quantified in \cite{BBBE:25} using large deviations principles based on a spectral analysis of the Feynman-Kac semigroup associated to the linearized and projective dynamics.
For SPDEs, a similar transition in the probability distribution of the FTLEs has been detected for the $\Phi^4_1$ equation in~\cite{BEN23}.~Moreover, for $\Phi^4_1$, the stochastic Swift-Hohenberg equation, surface growth models and Burgers equation, such aspects were investigated in the small noise regime in \cite{BN:23,BB:25,Burgers} relying on the approximation of the corresponding SPDE with an SDE called amplitude equation. This approximation is valid on a large time horizon and allows one to transfer information about the FTLEs of the SDE to the SPDE.

While \cite{BEN23} shows that for $\Phi^4_1$ there is a change of sign in the FTLEs detecting a transition as $\alpha$ crosses the eigenvalues of the Laplace operator, we prove that this is not the case for the $\Phi^4_2$ equation:
\begin{theorem*}[Main result, see Theorem~\ref{thm:main}]
  The support of the finite time Lyapunov exponent of \eqref{eq:phi} is the real line, that is, for any value of $\alpha \in \R$ and $T>0$:
  \[
  \tmop{supp} (\mathbb{P}_{\lambda_T}) =\mathbb{R}.
  \]
\end{theorem*}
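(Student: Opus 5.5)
The plan is to reduce the statement to a support theorem for the pair $(\Phi, :\!\Phi^2\!:)$, where $\Phi$ is the stationary solution and $:\!\Phi^2\!:$ its renormalized square, and then analyse the resulting deterministic linear equation.

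The linearization of \eqref{eq:phi} along $\Phi$ is
\[
\partial_t v = \Delta v - 3 :\!\Phi^2\!: v + \alpha v, \qquad v(0)=v_0 .
\]
The FTLE is $\lambda_T = \frac1T \log \|v(T)\|/\|v_0\|$, for a fixed initial datum or direction as in Definition~\ref{def:ftle}. Since $\alpha$ only shifts $\lambda_T$ by the constant $\alpha$, it suffices to show that $\lambda_T$ can be made arbitrarily close to any prescribed $\lambda\in\R$ with positive probability, uniformly in $\alpha$.

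The steps are as follows.

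\textbf{Step 1 (continuity).} Show that the solution map $:\!\Phi^2\!: \mapsto v(T)$ is continuous from $C_T\mathcal{C}^{-\kappa}$ to the state space. For small $\kappa$ this holds by a Schauder/paraproduct argument, since $:\!\Phi^2\!:\, v$ is well defined when $v\in\mathcal{C}^{2-\kappa'}$. Consequently $\lambda_T$ is a continuous function of $:\!\Phi^2\!:$ wherever $v(T)\neq 0$.

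\textbf{Step 2 (support theorem).} Prove that the law of $(\Phi,:\!\Phi^2\!:)$ on $[0,T]$ charges every neighbourhood of the pairs $(h, h^2 - c)$, where $h$ is smooth and $c\in\R$ is an arbitrary constant. The constant shift arises because the renormalization constant $C_\delta$ is not fixed under approximation by smooth controls. This is the key point and presumably the paper's support theorem for the stationary solution and its renormalized square.

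To prove it I would follow the Wong--Zakai/Cameron--Martin route:
\begin{itemize}
\item Write $\Phi = Z + u$, with $Z$ the stationary Ornstein--Uhlenbeck process.
\item Show that $(Z, :\!Z^2\!:, :\!Z^3\!:)$ has support containing $(h, h^2-c, h^3-3ch)$. This comes from approximating by rough-ish controls $h_n$ with $h_n^2 - c_n \to h^2 - c$, for instance by adding high-frequency oscillations whose squares average to an arbitrary constant; the oscillations can also produce negative shifts because $c$ is compared to the divergent $C_\delta$.
\item Transfer the statement to $\Phi$ via the continuity of the Da Prato--Debussche remainder $u$ in the enhanced noise.
\end{itemize}
I expect this step to be the main obstacle: achieving every real $c$, especially large positive effective potentials, requires the oscillation construction together with a Gaussian support/Cameron--Martin argument for the Wick powers.

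\textbf{Step 3 (conclusion).} Take $h\equiv 0$, so that $:\!\Phi^2\!:\approx -c$ is a constant potential. The linearized equation then becomes $\partial_t v = \Delta v + (3c+\alpha)v$. Choosing $v_0$ constant, or the projection onto the zero mode, gives
\[
\lambda_T \approx 3c + \alpha .
\]
Since $c\in\R$ is arbitrary, every $\lambda\in\R$ lies in the support. Combining this with the continuity from Step 1 and stationarity, which lets us take the initial law as part of the support statement, yields $\operatorname{supp}(\mathbb{P}_{\lambda_T})=\R$. If the paper's Definition~\ref{def:ftle} takes a supremum over $v_0$, I would use the lower and upper bounds given by constant $v_0$ and by the operator norm; for a constant potential both equal $3c+\alpha$.
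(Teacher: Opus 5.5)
Your architecture matches the paper's. You show that every constant $\kappa$ lies in the support of $:\!a_\alpha^2\!:$, compute $\Lambda(\kappa)=\alpha-3\kappa$ exactly for a constant potential (constant $v_0$ for the lower bound, $L^2$-contractivity of $P_t$ for the upper bound, as in Lemma~\ref{lem:sol}), and push forward along the continuous map $\Lambda$.

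\textbf{Different route to the constants.} You freeze the skeleton at $h\equiv 0$ and let the offset $c$ range over all of $\R$. For $\lambda<\alpha$ this needs $c<0$. That case is not covered by the Wick-power support theorem the paper relies on (Tsatsoulis--Weber, Theorem 6.3), which only gives $(h,h^2-c,h^3-3ch)$ with $c\geq 0$. Your oscillation idea does supply it when the Gaussian part is stationary. Time-independent shifts $g_n=\sqrt{2m}\cos(nx_1)$ satisfy $(g_n,g_n^2,g_n^3)\to(0,m,0)$ in $\cC^{-\varepsilon}$, so Cameron--Martin shifts of a support point near $(0,-c_0,0)$ reach $(0,m-c_0,0)$. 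The paper needs none of this, because it uses the nonlinear skeleton instead:
\begin{itemize}
\item for $\kappa<0$ it takes $\varphi_0=0$, $f=0$, $c=-\kappa$;
\item for $\kappa\geq0$ it takes $c=0$, the constant initial datum $\varphi_0=\sqrt\kappa$ and the constant force $f=\kappa^{3/2}-\alpha\kappa^{1/2}$. This makes $\sqrt\kappa$ a stationary skeleton solution with $\mathcal J_\alpha^2-c\equiv\kappa$ (Corollary~\ref{cor:const}).
\end{itemize}
So the case you flag as the main obstacle ($c<0$, i.e.\ a large positive Wick square) comes for free from a genuinely large field. Only the negative values of $\kappa$ come from the renormalization offset.

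\textbf{What is actually missing: the initial condition.} Take $Z$ to be the stationary Ornstein--Uhlenbeck process. Then $u(0)=\Phi(0)-Z(0)$ and $Z|_{[0,T]}$ are built from the same past noise, the latter through $Z(0)$. Continuity of the remainder in $(u(0),Z,:\!Z^2\!:,:\!Z^3\!:)$ therefore only helps once you know that this joint law charges neighbourhoods of $(0,(0,-c,0))$. Appealing to stationarity does not give that, and you never identify the support of the law of $\Phi(0)$.

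The paper splits at time $0$ instead. Three facts make this work:
\begin{itemize}
\item $a_\alpha(0)$ is independent of $\cF_0^\infty$;
\item its law is equivalent to the massive Gaussian free field, so it has support $\cC^{-\varepsilon}_0\supset H^1$ and in particular charges neighbourhoods of all constants;
\item the Gaussian part is started from $0$ at time $0$.
\end{itemize}
Hence the joint support is a product of the two supports.

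If you adopt that decomposition, note that elements of $\mathcal H_0(T)$ vanish at $t=0$. Your oscillations then cannot make $:\!Z^2\!:$ close to a positive constant near $t=0$ in $C_T\cC^{-\varepsilon}$. You would either use the paper's $\sqrt\kappa$ trajectory for $\lambda<\alpha$, or start the decomposition at time $-1$ so the oscillations are switched on before time $0$.
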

~This is an effect of the renormalization, which ``acts on the bifurcation parameter'': In~\eqref{eq:phi} the renormalization constant $C_\delta$ and the bifurcation parameter $\alpha$ play the same role. While there is a canonical choice for $C_\delta$ based on Wick renormalization, this is not sufficient to distinguish different values of $\alpha$. The proof relies on a support theorem for the stationary solution of \eqref{eq:phi} which is distributed according to the invariant measure of $\Phi^4_2$, and the argument is sufficiently robust to allow extensions to other equations with renormalization in the bifurcation parameter, for example pitchfork bifurcations in subcritical $\Phi^4$ models.~For $\Phi^4_1$, the support theorem approach recovers at least the first transition observed in \cite{BEN23}. 

 In future work, we plan to study the small noise regime and to approximate the dynamics of $\Phi^4_2$ close to a pitchfork bifurcation with an amplitude equation in the spirit of \cite{BN:23}.~Here we expect to observe a qualitative bifurcation since the renormalization affects only the stable modes and not the ones which are subject to a change of stability. We also plan to study the persistence of bifurcations under a renormalization that does not act directly on the bifurcation parameter.

\paragraph{Literature}

An alternative but related problem is to fix the bifurcation parameter $\alpha$
and introduce a coefficient $\varepsilon$ in front of the noise and determine for $\varepsilon\ll 1$ the extent to which the random motion resembles
that of the deterministic system, i.e.\ $\varepsilon=0$ when there are two stable solutions.~This situation has been investigated in~\cite{BeGe:13} for the $\Phi^4_1$ equation,  in \cite{BW:17} for the $\Phi^4_2$ equation, and in \cite{Klose2024} for the $\Phi^4_3$ equation.~For $\varepsilon>0$ the stable solutions become metastable and large deviations estimates provide a way of estimating the timescale of these transitions, giving another perspective on the persistence of
the bifurcation in the presence of noise.

There has been a strong interest in the dynamics of singular SPDEs. Here we mention \cite{GT20} who establish synchronization by noise for $\Phi^4_d$, where $d \in \{2,3\}$, using coming down from infinity estimates~\cite{Mourrat2017Global, Mourrat2017Dynamic} together with order-preservation similar to the 1D case considered in \cite{FGS}. Another synchronization type result based on large deviations was derived in \cite{Tsatsoulis2018} in the small noise regime, where solutions starting in a neighbourhood of $-1$ and $+1$ converge exponentially fast to each other.~Furthermore, concentration estimates around stable equilibria have been investigated for the $\Phi^4_2$ equation with time-dependent coefficients in \cite{BerglundNader} and similarly for $\Phi^4_3$ in \cite{Dimitri}. For ergodicity for the $\Phi^4_2$ equation on unbounded domains we refer to \cite{Bauerschmidt2025}, and for $\Phi^4_3$ to \cite{EP}.

Synchronization (in this context also called one force, one solution principle) for the KPZ equation on the torus was established in \cite{Tommaso} relying on order-preservation and Birkhoff's contraction principle, extending classical results \cite{Sinai1991,E2000} for the equation with spatially smooth noise to the singular white noise driven setting. The extension to the real line is given in~\cite{Dunlap} and \cite{KPZ} for the KPZ equation with spatially smooth or white noise, respectively.


\paragraph{Plan of the paper} This work is structured as follows. Section \ref{sec:p} collects basic concepts from paracontrolled calculus and white noise stochastic semi-flows. Section \ref{sec:main} contains our main result, Theorem \ref{thm:main}.~To this aim we establish a support theorem for the stationary solution of the $\Phi^4_2$ equation and its renormalized square.~Based on this, we derive in Corollary \ref{cor:const} that every constant function belongs to the support of the renormalized square and leads to the main result, Theorem \ref{thm:main}. This states that  
\[\text{supp}(\P_{\lambda_T})=\R, \]
where the random variable $\lambda_T$ is the finite-time Lyapunov exponent on a fixed time horizon $T>0$, as introduced in Definition \ref{def:ftle}. As already stated, this means that we cannot observe a change of sign in the FTLEs depending on the parameter $\alpha$.

For the convenience of the reader we provide in Appendix \ref{a1} the proof of the global well-posedness of the $\Phi^4_2$ equation from which we infer the continuity of certain maps that appear in the proof of the support theorem which relies on an auxiliary result stated in Appendix \ref{a2}.   

\paragraph{Acknowledgements}

The authors acknowledge funding by the Deutsche Forschungsgemeinschaft (DFG, German Research Foundation) - CRC/TRR 388 "Rough Analysis, Stochastic Dynamics and Related Fields" - Project ID 516748464. This material is based upon work supported by the National Science Foundation under Grant No. DMS-2424139, while AB and NP were in residence at the Simons Laufer Mathematical Sciences Institute in Berkeley, California, during the Fall 2025 semester.

\section{Preliminaries}\label{sec:p}

\subsection{Functional analytic tools}

We work with the following H\"older-Besov spaces of regularity $\beta\in\R$, see \cite[Section 2.7]{BCD:11}
      \begin{align}\label{besov:norm}
        \|f\|_{\mathcal{C}^\beta} := \| (2^{\beta k} \|\Delta_k f\|_{L^\infty})_{k\geq -1} \|_{l^\infty},
    \end{align}
    where $\Delta_k$ is the $k$-th Littlewoord-Paley block. For a fixed time horizon $T>0$ we write $\|f\|_\beta=\|f\|_{C_T\mathcal{C}^\beta}=\sup\limits_{t\in[0,T]}\|f(t)\|_\beta $ respectively $\|f\|_0=\|f\|_{C_TL^\infty}$. 
   
    We use the following useful estimates. For $\beta_1,\beta_2$ with $\beta_1+\beta_2>0$, the map $(u,v)\mapsto uv$ extends to a bounded linear form from $\mathcal{C}^{\beta_1\wedge \beta_2}
$ and
\begin{align}\label{pp} \| uv\|_{\cC^{\beta_1\wedge \beta_2}} \leq \|u\|_{\cC^{\beta_1}} \|v\|_{\cC^{\beta_2}}.  \end{align}

Moreover, the semigroup $(P_t)_{t\geq 0}$ generated by $\mathcal{L}:=\partial_t - \Delta$ satisfies for every $\beta\in\R$ and $\delta\geq 0$
\begin{align}\label{semigroup}
    \|P_t f\|_{\cC^{\beta+\delta}} \lesssim t^{-\delta/2} \|f\|_{\cC^\beta},~~t>0.
\end{align}


We write
$\Delta_{\leq j}=\sum\limits_{i\leq j }\Delta_i $ and similarly $\Delta_{\geq j}, \Delta_{>j}, \Delta_{<j}$. For two tempered distributions $u,v$ we define the {\em paraproduct }
\[ u \varolessthan v =\sum\limits_{j\geq -1} \Delta_{\leq j-2} u \Delta_j v \]
and the {\em resonant product}
\[ u \varodot v =\sum\limits_{i,j : |i-j|\leq 1} \Delta_i u \Delta_j v. \]
Their sum is denoted by 
\[  u \preccurlyeq v = u \varolessthan v + u\varodot v.\]
In the following, the notation $f\lesssim_a g $ means that there exists a constant $C=C(a)$ such that $f\leq C(a) g$.

\subsection{White noise stochastic semi-flows}
We fix $\varepsilon>0$ small. 
\begin{definition}
    Let $(\Omega,\cF,\P)$ be a probability space and $(\cF^t_s)_{t\geq s \geq 0}$ a family of sub $\sigma$-algebras of $\cF$. We call a family $\{\Phi(t;s;\cdot):t\geq s \geq 0\}$ of $\cF^t_s$-measurable maps
    \[ \Phi(t;s;\cdot)(\omega) : 
    \mathcal{C}^{-\varepsilon} \ni \Phi_0 \mapsto \Phi(t;s;\Phi_0)(\omega)\in \mathcal{C}^{-\varepsilon} \]
    a white noise stochastic semi-flow if the following properties hold.
    \begin{itemize}
        \item [1)] The $\sigma$-algebras $\cF^{t}_s$ and $\cF^{t'}_{s'}$ are independent for all $s\leq t \leq s'\leq t'$.
        \item [2)] The family $\{ \Phi(t;s;\cdot): t\geq s\geq 0 \}$ satisfies the flow property, meaning that 
        \[ \Phi(t+s;0;\Phi_0) =\Phi(t;s;\Phi(s;0;\Phi_0)),~\text{for every } t\geq s \geq 0, \Phi_0\in\mathcal{C}^{-\varepsilon}, \omega\in\Omega. \]
        \item [3)] For all $\Phi_0\in \mathcal{C}^{-\varepsilon}$ and $t\geq s \geq 0$, $\Phi(t;s;\Phi_0)$ and $\Phi(t-s;0;\Phi_0)$ have the same law. 
    \end{itemize}
\end{definition}
   
We now introduce a two-sided filtration associated to space-time white noise. This is denoted by $\mathcal{F}_s^t $ and is the augmentation of
 \[ \mathcal{\tilde{F}}^t_s = \sigma (\{ \xi(\phi) : \phi \in L^2(\mathbb{R}\times \mathbb{T}^2  ) ~\text{s.t. } \phi_{[0,s) \cup (t,\infty)\times \mathbb{T}^2 }=0  \}  ),~~ t\geq s\geq 0 \]
  which can be extended for $t\geq s>-\infty.$

\begin{theorem}\label{thm:a:alpha}
    There exists a stationary family of random variables $\{a_{\alpha}(t)\}_{t\geq 0}$ in $\mathcal{C}^{-\varepsilon}$ such that $a_\alpha(0)$ is independent of $\mathcal{F}^\infty_0$ and the law of $a_\alpha(0)$ is the invariant measure of~\eqref{eq:phi}.
\end{theorem}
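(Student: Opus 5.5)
The plan is to build the stationary solution by a pullback construction driven by the two-sided noise, after fixing the independence claim. As literally stated, $a_\alpha(0)$ cannot be independent of $\mathcal{F}^\infty_0$ if it is built from noise on $[0,\infty)$. I read the intended statement as: $a_\alpha(0)$ is measurable with respect to the past $\mathcal{F}^0_{-\infty}$ (noise on $(-\infty,0]$), and hence independent of the future noise generating $\mathcal{F}^\infty_0$. Then $a_\alpha(t)=\Phi(t;0;a_\alpha(0))$ is the stationary solution.

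\textbf{Step 1: Da Prato--Debussche splitting.} Extend $\xi$ to $\R\times\T^2$ and let $X(t)=\int_{-\infty}^t P^{(1)}_{t-s}\,\xi(\mathd s)$ be the stationary Ornstein--Uhlenbeck process, using the semigroup of $\Delta-1$ to avoid the zero mode.
\begin{itemize}
\item Define the Wick powers $:X^2:$ and $:X^3:$. These are stationary, adapted to $\mathcal{F}^t_{-\infty}$, and lie in $C_{loc}(\R;\cC^{-\varepsilon})$ almost surely.
\item Write $\Phi=X+v$. Then $v$ solves the random PDE
\[
\partial_t v=\Delta v-v^3-3v^2X-3v:X^2:-:X^3:+(\alpha+1)(X+v).
\]
This equation is well-posed in $\cC^{2-\varepsilon}$ by the paraproduct estimate \eqref{pp} and the Schauder bound \eqref{semigroup}, as in Appendix~\ref{a1}.
\end{itemize}

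\textbf{Step 2: Pullback limit.} For $s<0$, let $v^{(s)}$ solve the equation on $[s,\infty)$ with $v^{(s)}(s)=0$. I would invoke the coming-down-from-infinity bound (Mourrat--Weber / Tsatsoulis--Weber). It controls $\|v(t)\|_{L^\infty}$ by $1\vee t^{-1/2}$ times a polynomial in the norms of the Wick powers on a unit window, uniformly in the initial data. This gives tightness of $v^{(s)}(0)$ uniformly in $s$.

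To identify a limit I would use the order-preserving, synchronizing property of the cubic nonlinearity. For two solutions with the same noise, the difference $w$ satisfies
\[
\partial_t w=\Delta w-w\,(\text{nonnegative quadratic form})+\text{lower-order terms}.
\]
Combined with the Gess--Tsatsoulis synchronization for $\Phi^4_2$, this shows $v^{(s)}(0)-v^{(s')}(0)\to0$ in probability as $s,s'\to-\infty$.

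If full synchronization turns out to be too heavy, I would fall back on Krylov--Bogoliubov/Skorokhod tightness. This yields a stationary solution whose time-$0$ marginal is an invariant measure and whose time-$0$ value is $\mathcal{F}^0_{-\infty}$-measurable, by taking the limit of measurable functionals of the past noise in law jointly with the noise. Uniqueness of the invariant measure, via the strong Feller property and irreducibility (Tsatsoulis--Weber), then identifies the law with the $\Phi^4_2$ measure.

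\textbf{Step 3: Conclude.} Set $a_\alpha(0)=X(0)+v(0)$ and $a_\alpha(t)=\Phi(t;0;a_\alpha(0))$. Measurability with respect to $\mathcal{F}^0_{-\infty}$ gives independence from $\mathcal{F}^\infty_0$, by independence of white noise on disjoint time sets. Stationarity follows from shift-invariance of the noise together with the cocycle property: $a_\alpha(t)$ is the same functional applied to the shifted noise. Its law is invariant and hence equal to the unique invariant measure.

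The main obstacle is Step 2, the a.s.\ convergence of the pullback limit. Coming down from infinity gives only bounds; I would get convergence either from Gess--Tsatsoulis synchronization or from tightness plus uniqueness of the invariant measure. Only the weaker in-law route is really needed for the statement.
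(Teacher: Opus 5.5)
Your primary route is essentially the paper's proof. There too $a_\alpha(t)$ is the pullback limit of $\Phi(t;s;0)$ as $s\to-\infty$, the convergence (a Cauchy property in $L^p(\Omega,\mathcal{C}^{-\varepsilon})$) is taken from the Gess--Tsatsoulis synchronization result \cite{GT20}, and independence from $\mathcal{F}^\infty_0$ follows from $\mathcal{F}^0_{-\infty}$-measurability. That is exactly what the statement asserts, so no reinterpretation of it is needed.

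There are some minor caveats. After renormalization the difference equation does not contract pathwise: its potential contains $-3{:}X^2{:}$, which has no sign, and this is precisely what the paper's main theorem exploits. So the convergence genuinely rests on the order-preservation argument of \cite{GT20}, not on your heuristic. In your fallback, a limit in law keeps independence from the future noise but not measurability with respect to the past noise. Finally, the remainder equation should carry $\alpha v+(\alpha+1)X$ rather than $(\alpha+1)(X+v)$.
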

\begin{proof}
This follows by~\cite[Corollary 1.2]{GT20}. We sketch the main ideas of the proof. Similar to~\cite[Corollary 1.2]{GT20}, one can show that the family $\{\Phi(t;s;\cdot)\}_{t\geq s\geq 0}$ generates a white noise stochastic semi-flow $\Phi$. 
We extend the time interval to $-\infty$ and set
\[  a_\alpha(t;s):=\Phi(t;s;0) \text{ for } t\geq s>-\infty. \]
Moreover, using~\cite[Theorem 1.1]{GT20}, one can show that $(a_\alpha(t;s))_{s\leq 0}$ is a Cauchy sequence in $L^p(\Omega, \mathcal{C}^{-\varepsilon})$ for $p>2/\gamma$ where $0<\gamma<\varepsilon$.
We set 
\[ a_\alpha(t):=\lim\limits_{s\searrow -\infty} a_\alpha(t;s).\]
 Due to the flow property of $\Phi$, we have the following equality in law
\[ \Phi(t;s;0) =\Phi(t-s;0;0)=\Phi(0;-(t-s);0). \]
Since $\Phi(t;s;0)\to a_\alpha(t)$ and $\Phi(0;-(t-s);0)\to a_\alpha(0)$, as $s\searrow -\infty$, in $L^p(\Omega,\mathcal{C}^{-\varepsilon})$, we obtain the stationarity of $a_\alpha(t)$. Furthermore,
in order to show that the law of $a_\alpha(0)$ is the invariant measure of~\eqref{eq:phi} we consider an arbitrary $F\in C_b(\mathcal{C}^{-\varepsilon})$ and obtain using again the flow property of $\Phi$ that
\begin{align*}
    \E (\E ( F(\Phi(t;\Phi_0) ))|_{\Phi_0=a_\alpha(0)}&=\lim\limits_{s\to -\infty} \E (\E F(\Phi(t;\Phi_0))|_{\Phi_0 =\Phi(0;s;0)} =\lim\limits_{s\to-\infty} \E (F(\Phi(t;s;0))\\
    & = \lim\limits_{s\to-\infty} \E F(\Phi(0;-(t-s);0)=\E F (a_\alpha(0)).
\end{align*}
By construction, $a_\alpha(0)$ is $\mathcal{F}^0_{-\infty}$-measurable and therefore independent of $\mathcal{F}_0^\infty$.
\end{proof}
Let $v $ be the solution of the linearization of~\eqref{eq:phi} around $a_\alpha(\cdot)$ given by
\begin{align}\label{linearization}
\begin{cases}    \partial_t v = \Delta v + \alpha v - 3 : a^2_{\alpha}(t) : v \\v (0) =
   v_0 \in L^2. 
   \end{cases}
\end{align} 

\begin{definition}\label{def:ftle}
    We call the following random variable $\lambda_T$ the finite-time Lyapunov exponent at time $T$:
\begin{align}\label{ftle} \lambda_T: = \frac{1}{T} \log \| v (T) \|_{L (L^2, L^2)}= \frac{1}{T}  \log \sup_{v_0
   \in L^2  : \| v_0 \|_{L^2 } = 1} \| v (T)
   \|_{L^2}.
   \end{align}
\end{definition}
\begin{remark}
    Note that while $\Phi\in C_T\cC^{-\varepsilon}$, the solution of the linearization belongs to $C_T\cC^{2 - \varepsilon}$.
\end{remark}
In the next section we analyze the support of $\lambda_T$ given the support of the renormalized square $:a_\alpha^2:$. 

\section{Main result}\label{sec:main}

Due to Theorem~\ref{thm:a:alpha}, we obtain a support theorem for $(a_{\alpha}
(t))_{t \in [0, T]}$. To state it, let us define the following maps.

\begin{definition}
The map $\mathcal{J}_{\alpha}$ is defined as
\[ \mathcal{J}_{\alpha} : H^1 (\mathbb{T}^2) \times L^2 ([0, T] \times
   \mathbb{T}^2) \times [0, \infty) \to C_T \cC^{-
   \varepsilon}, \quad \mathcal{J}_{\alpha} (\varphi_0, f,
   c) := \Phi^{\varphi_0, f, c}, \]
where $\Phi^{\varphi_0, f, c}$ is the solution to
\[ \partial_t \Phi^{\varphi_0, f, c} = \Delta \Phi^{\varphi_0, f, c} -
   (\Phi^{\varphi_0, f, c})^3 + 3 c \Phi^{\varphi_0, f, c} + \alpha
   \Phi^{\varphi_0, f, c} + f, \quad \Phi^{\varphi_0, f, c}_0 = \varphi_0 .
\]
\end{definition}

\begin{definition}\label{def:Psi}
For any $\varepsilon > 0$, the map $\Psi_{\alpha}$ is defined as
\[ \Psi_{\alpha}: \cC^{-\varepsilon} \times (C_T\cC^{-\varepsilon})^3 \to C_T\cC^{-\varepsilon}, \qquad  \Psi_{\alpha}(\varphi_0, (Z_1, Z_2, Z_3)) := R^{\varphi_0, \mathcal Z} + Z_1, \]
where $\mathcal Z = (Z_1, Z_2, Z_3)$ and the remainder $R^{\varphi_0, \mathcal Z}$ is the solution to
\begin{equation} \label{eq:R}
    \partial_t R^{\varphi_0, \mathcal Z} = \Delta R^{\varphi_0, \mathcal Z} - (R^{\varphi_0, \mathcal Z})^3 - 3(R^{\varphi_0, \mathcal Z})^2 Z_1 - 3R^{\varphi_0, \mathcal Z} Z_2 - Z_3 + \alpha R^{\varphi_0, \mathcal Z}, \quad R^{\varphi_0, \mathcal Z}(0) = \varphi_0.
\end{equation}   
\end{definition}

\begin{theorem}[Continuity of the Solution Map]\label{thm:continuity}
The map $\Psi_\alpha$ introduced above satisfies the following continuity properties.
\begin{enumerate}
    \item The remainder map $\mathcal R: (\varphi_0, (Z_1, Z_2, Z_3)) \mapsto R^{\varphi_0, \mathcal{Z}}$ is continuous from $\cC^{-\varepsilon} \times (C_T\cC^{-\varepsilon})^3$ into $C_T\cC^{2-\varepsilon}$.
    \item The full solution map $\Psi_\alpha: (\varphi_0, (Z_1, Z_2, Z_3)) \mapsto R^{\varphi_0, \mathcal Z} + Z_1$ is continuous from $\cC^{-\varepsilon} \times (C_T \cC^{-\varepsilon})^3$ into $C_T\cC^{-\varepsilon}$.
\end{enumerate}
\end{theorem}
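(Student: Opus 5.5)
The plan is to follow the Da Prato--Debussche strategy. Equation \eqref{eq:R} is treated as a semilinear heat equation whose nonlinearity involves only products of the remainder $R$ (expected to have regularity close to $2$) with the rough inputs $Z_i\in C_T\cC^{-\varepsilon}$. By \eqref{pp}, such products make sense since $(2-\varepsilon)+(-\varepsilon)>0$. Because $\varphi_0$ is only in $\cC^{-\varepsilon}$, I will work in the mild formulation
\[
R(t)=P_t\varphi_0+\int_0^t P_{t-s}\big(-R^3-3R^2Z_1-3RZ_2-Z_3+\alpha R\big)(s)\,\mathd s,
\]
using a time-weighted norm $\sup_{t\le T} t^{\gamma}\|R(t)\|_{\cC^{2-\varepsilon'}}$ together with $\sup_t\|R(t)\|_{\cC^{-\varepsilon}}$, with $\gamma$ slightly above $1$ and $\varepsilon'>\varepsilon$ small. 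The smoothing estimate \eqref{semigroup} gives $\|P_t\varphi_0\|_{\cC^{2-\varepsilon'}}\lesssim t^{-(2-\varepsilon'+\varepsilon)/2}\|\varphi_0\|_{\cC^{-\varepsilon}}$. The space-time continuity into $C_T\cC^{2-\varepsilon}$ is then understood on $[\tau,T]$ for every $\tau>0$, or on all of $[0,T]$ if $\varphi_0$ is more regular. Strictly speaking, the statement in item 1 requires this interpretation at $t=0$, and I would record this in the proof.

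\emph{Step 1 (local well-posedness and local Lipschitz dependence).} Using \eqref{pp}, \eqref{semigroup}, and the fact that $\int_0^t (t-s)^{-(2-\varepsilon'+\varepsilon)/2}s^{-3\gamma'}\,\mathd s$ is finite for a suitable choice of exponents, I will show that the mild map is a contraction on a ball of the weighted space for a short time $t_0$. The time $t_0$ depends only on an upper bound $M$ for $\|\varphi_0\|_{\cC^{-\varepsilon}}+\sum_i\|Z_i\|_{C_T\cC^{-\varepsilon}}$. The cubic term $R^3$ with $R\in \cC^{-\varepsilon}$ near $t=0$ is the delicate point. To handle it, I will first work in an intermediate norm, namely $L^\infty$ with weight $t^{\varepsilon/2}$, and only afterwards bootstrap to $\cC^{2-\varepsilon'}$. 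The same fixed-point estimates, applied to the difference of two solutions, give
\[
\|R^{\varphi_0,\mathcal Z}-R^{\tilde\varphi_0,\tilde{\mathcal Z}}\|\lesssim_M \|\varphi_0-\tilde\varphi_0\|_{\cC^{-\varepsilon}}+\sum_i\|Z_i-\tilde Z_i\|_{C_T\cC^{-\varepsilon}}
\]
on $[0,t_0]$.

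\emph{Step 2 (global a priori bound).} This is the main obstacle, since the local time must not shrink along the iteration. I will establish an $L^p$ energy estimate on $[\tau,T]$ by testing \eqref{eq:R} against $R^{p-1}$. The coercive term $-\int R^{p+2}$ dominates. The terms $R^{p+1}Z_1$, $R^pZ_2$ and $R^{p-1}Z_3$ are rough, so I will estimate them by duality in Besov spaces, $|\langle R^{p+1},Z_1\rangle|\lesssim \|R^{p+1}\|_{B^{\varepsilon}_{1,1}}\|Z_1\|_{\cC^{-\varepsilon}}$. I will then interpolate $\|R^{p+1}\|_{B^{\varepsilon}_{1,1}}$ between $\int|R|^{p+2}$ and $\int |R|^{p-2}|\nabla R|^2$, both of which appear with a good sign, and absorb the result using Young's inequality with a small parameter. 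This is the standard argument, for example as in Mourrat--Weber or in Appendix \ref{a1}. It yields $\sup_{t\in[\tau,T]}\|R(t)\|_{L^p}\le C(M,\tau,T)$. A bootstrap through the mild formulation and \eqref{semigroup} then upgrades this to a bound on $\|R\|_{C_{[\tau,T]}\cC^{2-\varepsilon}}$ that depends only on $M$.

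\emph{Step 3 (conclusion).} With the a priori bound in hand, I will restart Step 1 from the times $t_0, 2t_0,\dots$, using a uniform local time determined by the global bound. Iterating the local Lipschitz estimate finitely many times gives local Lipschitz, and in particular continuous, dependence of $\mathcal R$ into $C_T\cC^{2-\varepsilon}$ (weighted at $0$), which proves item 1. Item 2 then follows immediately. The map $(\varphi_0,\mathcal Z)\mapsto Z_1$ is trivially continuous into $C_T\cC^{-\varepsilon}$, and $C_T\cC^{2-\varepsilon}\hookrightarrow C_T\cC^{-\varepsilon}$ continuously. Near $t=0$, continuity of $\mathcal R$ into $C_T\cC^{-\varepsilon}$ comes from the unweighted $\cC^{-\varepsilon}$ component of the norm in Step 1. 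Therefore $\Psi_\alpha=\mathcal R+Z_1$ is continuous.
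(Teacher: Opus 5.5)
Your proposal is correct in outline but takes a genuinely different route from the paper.

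\textbf{The paper's route.} The paper proves local well-posedness only for $\varphi_0\in\cC^{2-\varepsilon}$, by a plain Picard iteration in $C_T\cC^{2-\varepsilon}$. It obtains the global bound from a paraproduct splitting $R=R_1+R_2$, not from $L^p$ testing. The part $R_1$ carries $Z_3$ and the high-frequency paraproducts of $R^2$ and $R$ against $Z_1$ and $Z_2$ (cut at $2^{2n}$ and $2^n$), and is controlled by Schauder estimates. The part $R_2$ carries the damping $-R_2^3$ and is bounded in $L^\infty$ by a maximum principle, $\|R_2\|_0\lesssim\|U_2\|_0^{1/3}$, with $n$ tuned to $\|R\|_0$. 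This gives $L^\infty$ control in one stroke. It needs no integration by parts, no $B^{\varepsilon}_{1,1}$--$\cC^{-\varepsilon}$ duality, and no passage from $L^p$ back to $\cC^{2-\varepsilon}$. Appendix~\ref{a1} uses this maximum-principle argument, not the $L^p$ argument you attribute to it.

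\textbf{What your route buys.} Your energy method is longer but standard (Da Prato--Debussche, Mourrat--Weber), and your restart via $L^p\hookrightarrow\cC^{-2/p}\subset\cC^{-\varepsilon}$ for large $p$ closes it. Its real gain is the rough initial datum. The paper passes from $\varphi_0\in\cC^{2-\varepsilon}$ to $\varphi_0\in\cC^{-\varepsilon}$ only by invoking $\cC^{2-\varepsilon}\hookrightarrow\cC^{-\varepsilon}$, and that does not give continuity on the larger domain. Yet this is exactly the case needed, since $a_\alpha(0)$ is almost surely not in $\cC^{2-\varepsilon}$. Your weighted local theory supplies it. You are also right that item 1 can only hold on $[\tau,T]$ or in weighted form.

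\textbf{Two small corrections.}
\begin{enumerate}
\item The intermediate norm in Step 1 cannot be $L^\infty$. By \eqref{pp}, the products $RZ_2$ and $R^2Z_1$ need $R\in\cC^\beta$ with $\beta>\varepsilon$. Use instead $\sup_t t^{(\beta+\varepsilon)/2}\|R(t)\|_{\cC^\beta}$, which closes for $\beta+\varepsilon<2/3$.
\item The unweighted bound on $\sup_t\|R(t)\|_{\cC^{-\varepsilon}}$ does not give time continuity at $t=0$. Indeed, $P_t\varphi_0\to\varphi_0$ in $\cC^{-\varepsilon}$ if and only if $\varphi_0\in\cC^{-\varepsilon}_0$. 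Item 2 should therefore be stated on $\cC^{-\varepsilon}_0\times(C_T\cC^{-\varepsilon})^3$. This suffices, because the theorem is only applied on $S_0\times S_1$ with $S_0=\cC^{-\varepsilon}_0$.
\end{enumerate}
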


\begin{proof}
    See Appendix \ref{a1}.
\end{proof}

The support theorems for $a_\alpha$ and $:a^2_\alpha:$ rely on the following result. 

\begin{lemma}\label{lem:supp}
Let $E$ and $F$ be metric spaces and let $\varphi:E\to F$ be continuous. Let $X$ be an $E$-valued random variable and denote by $\mathbb{P}_X$ its law, as a measure on the Borel sets of $E$. Denote $\mathbb{P}_{\varphi(X)} = \mathbb{P}_X \circ \varphi^{-1}$ the pushforward. Assume that $\mathbb{P}_X$ is a Radon probability measure (e.g., this holds if $E$ is Polish). Then
\[ \tmop{supp}(\mathbb{P}_{\varphi(X)}) = \overline{\varphi(\tmop{supp}(\mathbb{P}_X))}. \]
\end{lemma}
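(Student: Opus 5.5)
We prove the two inclusions separately. Recall that $\tmop{supp}(\mu)$ is the set of points $y$ such that every open neighbourhood of $y$ has positive $\mu$-measure; it is always closed. Throughout we write $S := \tmop{supp}(\mathbb{P}_X)$.

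\emph{Inclusion $\overline{\varphi(S)} \subseteq \tmop{supp}(\mathbb{P}_{\varphi(X)})$.} Since the right-hand side is closed, it suffices to show $\varphi(S) \subseteq \tmop{supp}(\mathbb{P}_{\varphi(X)})$. Let $x \in S$ and let $U \subseteq F$ be an open neighbourhood of $\varphi(x)$. By continuity of $\varphi$, the set $\varphi^{-1}(U)$ is an open neighbourhood of $x$. Since $x \in S$, we get
\[
\mathbb{P}_{\varphi(X)}(U) = \mathbb{P}_X(\varphi^{-1}(U)) > 0 .
\]
This inclusion uses only continuity and does not need the Radon property.

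\emph{Inclusion $\tmop{supp}(\mathbb{P}_{\varphi(X)}) \subseteq \overline{\varphi(S)}$.} This is where the Radon assumption enters, and it is the only nontrivial step. First we show that $\mathbb{P}_X(E\setminus S) = 0$. The set $E \setminus S$ is open, and every point of it has an open neighbourhood of measure zero, so $E\setminus S$ is a union of $\mathbb{P}_X$-null open sets. By inner regularity of the Radon measure, $\mathbb{P}_X(E\setminus S) = \sup\{\mathbb{P}_X(K) : K \subseteq E\setminus S \text{ compact}\}$. Each such compact $K$ is covered by finitely many of these null open sets, so $\mathbb{P}_X(K) = 0$, and hence $\mathbb{P}_X(E\setminus S) = 0$. (In the Polish case one can instead use second countability: $E \setminus S$ is then a countable union of null open sets.)

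Now let $y \notin \overline{\varphi(S)}$. Then there is an open neighbourhood $V$ of $y$ with $V \cap \varphi(S) = \emptyset$, that is, $\varphi^{-1}(V) \subseteq E \setminus S$. It follows that
\[
\mathbb{P}_{\varphi(X)}(V) = \mathbb{P}_X(\varphi^{-1}(V)) \le \mathbb{P}_X(E\setminus S) = 0,
\]
so $y \notin \tmop{supp}(\mathbb{P}_{\varphi(X)})$. The two inclusions together give the claimed equality.

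The main obstacle is the full-measure property $\mathbb{P}_X(E\setminus S) = 0$. It can fail for general non-separable metric spaces without a regularity assumption, which is exactly why the statement assumes $\mathbb{P}_X$ is Radon.
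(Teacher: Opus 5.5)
Your proof is correct and takes essentially the same approach as the paper. The first inclusion is identical. For the second, you use the same inner-regularity-plus-finite-subcover mechanism, packaged as $\mathbb{P}_X(E\setminus S)=0$ followed by a contrapositive, whereas the paper picks compact $K_n\subset\varphi^{-1}(B(y,1/n))$ of positive measure and takes $x_n\in\tmop{supp}(\mathbb{P}_X|_{K_n})\subset S$ directly. Your packaging has the small benefit of spelling out the compactness step the paper leaves implicit (nonemptiness of $\tmop{supp}(\mathbb{P}_X|_{K_n})$), and it needs no metric on $F$.
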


\begin{proof}
    See Appendix \ref{a2}.
\end{proof}

We now establish the support theorems for $a_\alpha$ and its renormalized square.

\begin{theorem}[Support theorem of $a_\alpha$]\label{thm:supp_a_alpha}
  The support of the law $\mathbb{P}_{(a_{\alpha} (t))_{t \in [0,
  T]}}$ is
  \[ \tmop{supp} (\mathbb{P}_{(a_{\alpha} (t))_{t \in [0, T]}}) = \overline{\{
     \mathcal{J}_{\alpha} (\varphi_0, f, c) : \varphi_0 \in H^1, f \in L^2
     ([0, T] \times \mathbb{T}^2), c \in [0, \infty) \}}, \]
  where the closure is taken in $C_T\cC^{- \varepsilon}$.
\end{theorem}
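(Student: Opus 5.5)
We write $a_\alpha$ on $[0,T]$ as the image of a pair (initial datum, enhanced noise) under the continuous map $\Psi_\alpha$. Then we apply Lemma~\ref{lem:supp} and identify the support of that input. Let $Z$ be the stationary-in-time solution of $\mathcal{L} Z=-Z+\xi$, or the solution of the linear equation started at $0$, and set $\mathcal Z=(Z,:Z^2:,:Z^3:)$ with the Wick powers. Then $a_\alpha = \Psi_\alpha(a_\alpha(0), \mathcal Z)$, up to absorbing a linear term into $\alpha$ or $Z_3$ as in the Da Prato--Debussche decomposition. Here $a_\alpha(0)$ is independent of $\mathcal F^\infty_0$, hence of $\mathcal Z$, by Theorem~\ref{thm:a:alpha}.

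Since the law of $(a_\alpha(0),\mathcal Z)$ is a product measure on a Polish space (using separable closures of smooth functions in $\cC^{-\varepsilon}$), its support is $\tmop{supp}(\mathbb P_{a_\alpha(0)})\times \tmop{supp}(\mathbb P_{\mathcal Z})$. Theorem~\ref{thm:continuity} and Lemma~\ref{lem:supp} then give
\[
\tmop{supp}(\mathbb P_{a_\alpha})=\overline{\Psi_\alpha\big(\tmop{supp}(\mathbb P_{a_\alpha(0)})\times \tmop{supp}(\mathbb P_{\mathcal Z})\big)}.
\]

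Next we determine both supports. For the $\Phi^4_2$ measure, which is equivalent to the Gaussian free field measure (or at least has full support on the closure of smooth functions), $\tmop{supp}(\mathbb P_{a_\alpha(0)})$ is the closure of $H^1$ in $\cC^{-\varepsilon}$. One can cite the absolute continuity with respect to the free field, whose Cameron--Martin space is $H^1$, or argue through the flow itself.

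The key step is the support of the Wick-renormalized lift $\mathcal Z$. We claim
\[
\tmop{supp}(\mathbb P_{\mathcal Z})=\overline{\{(h,h^2-c,h^3-3ch): h=\mathcal L^{-1}f,\ f\in L^2,\ c\ge0\}}.
\]
For the inclusion ``$\supseteq$'', we use Cameron--Martin shifts $\xi\mapsto \xi+f$, which shift $\mathcal Z$ to $(Z+h, :Z^2:+2Zh+h^2,\dots)$. We combine these with the fact that mollified $(Z_\delta, Z_\delta^2-C'_\delta,\dots)$ can be made small while $C_\delta - C'_\delta$ is arbitrary. This is the mechanism that makes every $c\ge0$ appear.

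More concretely, we pick a smooth $h$ and approximate $\xi$ by $f-\mathcal{L}Z^{(n)}$, where $Z^{(n)}$ is a rapidly oscillating smooth field with $Z^{(n)}\to0$ in $\cC^{-\varepsilon}$ and $(Z^{(n)})^2\to c$. An example is $Z^{(n)}=\sqrt{2c}\cos(n x_1)$. This field satisfies $(Z^{(n)})^2\to c$ and $(Z^{(n)})^3\to 0$, and $3(Z^{(n)})^2 h\to 3ch$. The resulting triple converges to $(h,h^2+c,h^3+3ch)$ after Wick subtraction in the limit. The sign matches the $+3c\Phi$ in $\mathcal J_\alpha$, because the renormalized nonlinearity $-:\!\Phi^3\!:$ produces $-(h^3+3ch)$, that is, an effective $-h^3-3ch$ with the wrong sign. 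So we must be careful with the signs, and adjust $Z^{(n)}$ if needed so that it produces $+3c$.

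Showing that these oscillatory deterministic lifts lie in the support is the main obstacle. It needs a support-type approximation showing that the probability of $\mathcal Z$ lying near the lift of a smooth path is positive. This follows from Gaussian (Wiener chaos) arguments: write the noise as a Cameron--Martin shift plus a small remainder, and check continuity of the shifted Wick polynomials in probability.

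The reverse inclusion ``$\subseteq$'' follows because $\mathcal Z$ is the limit of $(Z_\delta,Z_\delta^2-C_\delta,Z_\delta^3-3C_\delta Z_\delta)$ with $C_\delta\ge0$ and $Z_\delta$ smooth, so it is of the stated form.

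Finally, we plug the claimed supports into $\Psi_\alpha$. For $Z_1=h$, $Z_2=h^2+ c$, $Z_3=h^3+3ch$ (with the appropriate sign), the function $\Phi=R+h$ solves exactly the equation defining $\mathcal J_\alpha(\varphi_0+h(0), f, c)$. Using continuity of $\Psi_\alpha$ and of $\mathcal J_\alpha$ in its arguments, we pass to closures, which gives equality of the two closed sets in $C_T\cC^{-\varepsilon}$.
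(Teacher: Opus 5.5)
Your skeleton is the paper's: $a_\alpha=\Psi_\alpha(a_\alpha(0),\mathcal Z)$, product support by independence, $\tmop{supp}(\mathbb P_{a_\alpha(0)})=\overline{H^1}$ via equivalence with the free field, Lemma~\ref{lem:supp} combined with Theorem~\ref{thm:continuity}, and identification of $\Psi_\alpha$ on $H^1\times A$ with $\mathcal J_\alpha$.

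One small point on the choice of $Z$. Of your two options, only the one started from $0$ at time $0$ works. The stationary $Z$ depends on the noise before time $0$, so it is not independent of $a_\alpha(0)$. Taking $\partial_t Z=(\alpha+\Delta)Z+\xi$ also matches $\mathcal H_0(T)$ and \eqref{eq:R} without absorbing extra terms.

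The real difference is that the paper does not prove $\tmop{supp}(\mathbb P_{\mathcal Z})=\overline{A}$; it imports it from \cite[Theorem 6.3]{Tsatsoulis2018}. You try to derive it, and the derivation fails exactly for the elements that matter.

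\textbf{The gap: the triples with $c>0$.}
\begin{itemize}
\item Superimposing deterministic oscillations $Z^{(n)}=\sqrt{2c}\cos(nx_1)$ on $h$ yields $(h,h^2+c,h^3+3ch)$. The map $\Psi_\alpha$ sends this to the solution of $\partial_t\Phi=\Delta\Phi+\alpha\Phi-\Phi^3-3c\Phi+f$. That is $\mathcal J_\alpha(\cdot,f,-c)$, not $\mathcal J_\alpha(\cdot,f,c)$.
\item So your final identification with ``$h^2+c$ (with the appropriate sign)'' is wrong as written. With $(h,h^2-c,h^3-3ch)$ one gets $\mathcal J_\alpha(\varphi_0,f,c)$ exactly.
\item This cannot be fixed by adjusting $Z^{(n)}$. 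If $Z^{(n)}\to0$ in $\cC^{-\varepsilon}$, every limit of $(h+Z^{(n)})^2=h^2+2hZ^{(n)}+(Z^{(n)})^2$ is $\ge h^2$, because $(Z^{(n)})^2\ge0$. Unrenormalized oscillations can only raise the square; such limits are already produced by the $c=0$ part of $A$ with oscillating $h$.
\end{itemize}

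A term $h^2-c$ with $c>0$ can only come from the counterterm. What is needed is a positive-probability event on which the field carries about $c$ less small-scale variance than the counterterm $C_\delta$ removes, while its own Wick powers stay small, so that $Z_\delta^2-C_\delta\approx-c$. One way to get it: force a band of Fourier modes carrying variance $c$ to be near $0$, independently of a small-ball event for the Wick lift of the other modes. This gives triples near $(0,-c,0)$, at least away from $t=0$. The Cameron--Martin shift $T_h(Z_1,Z_2,Z_3)=(Z_1+h,\,Z_2+2hZ_1+h^2,\,Z_3+3hZ_2+3h^2Z_1+h^3)$ then moves them near $(h,h^2-c,h^3-3ch)$.

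You state the existence of such an event as ``a fact'' and never prove it, yet it is the whole probabilistic content of this step. It is also not a side case. Corollary~\ref{cor:const} needs $c=-\kappa>0$ for $\kappa<0$, i.e.\ for FTLE values $\lambda>\alpha$, and negative constants are not limits of $\mathcal J_\alpha(\varphi_0,f,0)^2\ge0$. To close the gap, either cite \cite[Theorem 6.3]{Tsatsoulis2018} as the paper does, or actually prove that event.
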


\begin{proof}
Let
\[
    S_0 := \mathcal{C}^{-\varepsilon}_0:=\overline{H^1}^{\,\cC^{-\varepsilon}} = \Big\{ \varphi \in \cC^{-\varepsilon} : \lim_{j \to \infty} 2^{-j\varepsilon} \|\Delta_j \varphi \|_{L^\infty} = 0\Big\},
\]
and
\[ A:=\{(h,h^2-c,h^3-3ch):h\in\mathcal H_0(T), c\in[0,\infty)\}\subset (C_T\cC^{-\varepsilon})^3, \]
where 
\[ \mathcal H_0(T) :=\{ h(\cdot)=\int_0^{\cdot} e^{(\cdot-r)(\alpha+\Delta)} f(r)\,\mathrm{d}r : f\in L^2([0,T]\times\mathbb T^2)\}.\]
By Theorem~\ref{thm:a:alpha}, the initial datum $a_\alpha (0)$ is independent of $\cF^\infty_0$ and distributed according to the invariant measure of the $\Phi^4_2$ equation denoted by $\mu_{\Phi^4_2}$. This measure has the density
\[ \rho(\varphi) = \frac{1}{Z} \exp \left( - \frac{1}{2} \int_{\mathbb{T}^2} (: \varphi^4 :
     - (\alpha + 1) \varphi^2) (x) \mathd x \right) \nu (\mathd \varphi) \]
with respect to the massive Gaussian free field $\nu \sim \mathcal{N} (0, (1 - \Delta)^{- 1})$. 
This construction follows the Euclidean strategy for quantum field theory, as detailed in \cite[Chapter~V]{Simon2015}. Since the density $\rho$ is strictly positive $\nu$-almost everywhere, the measures $\mu_{\Phi^4_2}$ and $\nu$ are mutually absolutely continuous, and thus
\[\tmop{supp}(\mu_{\Phi^4_2}) = \tmop{supp}(\nu).\] 

Now we show for completeness that $\tmop{supp}(\nu) = S_0$, where $S_0 = \mathcal{C}^{-\varepsilon}_0$ as defined previously. 
The sample paths of the Gaussian free field lie in $S_0$ almost surely, hence $\nu(S_0)=1$. 
Since $S_0$ is a separable Banach space, the restriction $\nu|_{S_0}$ is a Radon Gaussian measure on $S_0$. The Cameron-Martin space of $\nu$ is $H^1$, so by \cite[Theorem 3.6.1]{Bogachev1998}, the support of $\nu$ within the topology of $S_0$ is $\tmop{supp}_{S_0}(\nu) = \overline{H^1}^{S_0}$, which by construction is $S_0$ itself. 
Since $S_0$ is a closed subset of $\cC^{-\varepsilon}$ and $\nu(S_0)=1$, it follows that the support of $\nu$ in $\cC^{-\varepsilon}$ is also $S_0$.
By \cite[Theorem 6.3]{Tsatsoulis2018} (adapted to $[0,T]$), the law of the Wick powers satisfies
\[ \tmop{supp}(\mathbb{P}_{(Z,:Z^2:,:Z^3:)})=S_1:=\overline{A} \subset (C_T\cC^{-\varepsilon})^3, \]
with closure taken in $(C_T\cC^{-\varepsilon})^3$. Since $H^1$ and $\mathcal H_0(T)$ are separable, both $S_0$ and $S_1$ are closed separable subsets of their ambient spaces, hence Polish. 
By the independence of $a_\alpha(0)$ and of $\cF^\infty_0$, the joint law $\mathbb{P}_{(a_\alpha(0),Z,:Z^2:,:Z^3:)}$ has support
\[\tmop{supp}(\mathbb{P}_{(a_\alpha(0),Z,:Z^2:,:Z^3:)}) = S_0 \times S_1\]
and is a Radon probability measure on the Polish space $ S_0\times S_1 $.

An element in $A$ is a triplet of the form $(h,h^2-c,h^3-3ch)$ where $h \in \mathcal H_0(T)$ and $c\in[0,\infty)$. By definition, each $h \in \mathcal H_0(T)$ arises as the unique mild solution to the linear PDE
\[\partial_th=(\alpha + \Delta)h + f, \qquad h(0)=0,\]
for some $f \in L^2([0, T] \times \mathbb{T}^2)$. This establishes a correspondence between $(h,h^2-c,h^3-3ch)$ and $(f,c)$. Therefore, on the dense set $H^1 \times A$, the map $\Psi_\alpha$ coincides with $\mathcal J_\alpha$ with the corresponding $f$, i.e.
\[ \Psi_\alpha: H^1\times A \to C_T\cC^{-\varepsilon},\qquad \Psi_\alpha(\varphi_0,(h,h^2-c,h^3-3ch))=\mathcal J_\alpha(\varphi_0,f(h),c). \]

By Theorem~\ref{thm:continuity}, the map $\Psi_\alpha$ is continuous on $S_0 \times S_1$ with respect to the product topology induced by $H^1$ and $(C_T\cC^{-\varepsilon})^3$.  Now apply Lemma~\ref{lem:supp} to the Radon measure $\mathbb{P}_{(a_\alpha(0),Z,:Z^2:,:Z^3:)}$ on the Polish space $E:=S_0 \times S_1$. We obtain
\[ \tmop{supp}(\mathbb{P}_{\Psi(a_\alpha(0),Z,:Z^2:,:Z^3:)}) = \overline{\Psi_\alpha(\tmop{supp}(\mathbb{P}_{(a_\alpha(0),Z,:Z^2:,:Z^3:)})} = \overline{\Psi_\alpha(S_0\times S_1)}, \]
with closure in \(C_T\cC^{-\varepsilon}\). 
By the definition of $\Psi_\alpha$ on $H^1\times A$, we have
\[  \Psi_\alpha(H^1\times A) = \{ \mathcal{J}_{\alpha} (\varphi_0, f, c) : \varphi_0 \in H^1, f \in L^2([0, T] \times \mathbb{T}^2), c \in [0, \infty) \},  \]
and therefore their closures are identical
\[  \overline{\Psi_\alpha(H^1\times A)} = \overline{\{ \mathcal{J}_{\alpha} (\varphi_0, f, c) : \varphi_0 \in H^1, f \in L^2([0, T] \times \mathbb{T}^2), c \in [0, \infty) \}}.  \]
Since $H^1\times A$ is a dense subset of $S_0 \times S_1$ and $\Psi_\alpha$ is continuous,
\[ \overline{\Psi_\alpha(H^1\times A)}=\overline{\Psi_\alpha(S_0\times S_1)}. \]
Combining these equalities, we obtain
\[ \tmop{supp} (\mathbb{P}_{(a_{\alpha} (t))_{t \in [0, T]}}) = \overline{\{
     \mathcal{J}_{\alpha} (\varphi_0, f, c) : \varphi_0 \in H^1, f \in L^2
     ([0, T] \times \mathbb{T}^2), c \in [0, \infty) \}}, \]
where the closure is taken in $C_T\cC^{- \varepsilon}$.
\end{proof}

\begin{corollary}[Support theorem of ${:}a_\alpha^2{:}$]
  The support of the law $\mathbb{P}_{(:a_\alpha^2:(t))_{t\in[0,T]}}$ is
  \[ \tmop{supp}(\mathbb{P}_{(:a_\alpha^2:(t))_{t\in[0,T]}}) = \overline{\{\mathcal{J}_\alpha(\varphi_0,f,c)^2 - c : \varphi_0 \in H^1, f \in L^2([0,T]\times \mathbb{T}^2), c \in [0,\infty)\}}, \]
  where the closure is taken in $C_T\cC^{-\varepsilon}$.
\end{corollary}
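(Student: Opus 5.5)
We obtain the corollary by the same pushforward argument as in Theorem~\ref{thm:supp_a_alpha}, now with a map that outputs the renormalized square. The key observation is that $a_\alpha = R + Z_1$ with $R = R^{a_\alpha(0),\mathcal Z}\in C_T\cC^{2-\varepsilon}$. Expanding formally gives
\[ :a_\alpha^2: \;=\; R^2 + 2 R Z_1 + Z_2 . \]
The product $RZ_1$ is well defined by \eqref{pp}, since $(2-\varepsilon)+(-\varepsilon)>0$. We therefore define
\[ \Theta_\alpha:\cC^{-\varepsilon}\times(C_T\cC^{-\varepsilon})^3\to C_T\cC^{-\varepsilon},\qquad \Theta_\alpha(\varphi_0,\mathcal Z):=(R^{\varphi_0,\mathcal Z})^2+2R^{\varphi_0,\mathcal Z}Z_1+Z_2 . \]
We check that $\Theta_\alpha(a_\alpha(0),Z,{:}Z^2{:},{:}Z^3{:})={:}a_\alpha^2{:}$ almost surely. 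This is essentially the definition of the renormalized square of the Da Prato--Debussche decomposition. It follows by passing to the limit in the mollified identity $\Phi_\delta^2 - C_\delta = R_\delta^2 + 2R_\delta Z_{1,\delta} + (Z_{1,\delta}^2 - C_\delta)$.

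\textbf{Continuity of $\Theta_\alpha$.} By Theorem~\ref{thm:continuity}(1), $(\varphi_0,\mathcal Z)\mapsto R^{\varphi_0,\mathcal Z}$ is continuous into $C_T\cC^{2-\varepsilon}$. Squaring is continuous on $C_T\cC^{2-\varepsilon}\subset C_TL^\infty$. The product $(R,Z_1)\mapsto RZ_1$ is continuous from $C_T\cC^{2-\varepsilon}\times C_T\cC^{-\varepsilon}$ into $C_T\cC^{-\varepsilon}$, because it is bilinear and bounded by \eqref{pp} applied pointwise in time. The map $\mathcal Z\mapsto Z_2$ is a projection. Hence $\Theta_\alpha$ is continuous on $S_0\times S_1$.

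\textbf{Support via Lemma~\ref{lem:supp}.} As in the proof of Theorem~\ref{thm:supp_a_alpha}, the joint law of $(a_\alpha(0),Z,{:}Z^2{:},{:}Z^3{:})$ is a Radon measure on the Polish space $S_0\times S_1$ with full support. Lemma~\ref{lem:supp} then gives
\[ \tmop{supp}(\mathbb{P}_{:a_\alpha^2:})=\overline{\Theta_\alpha(S_0\times S_1)}=\overline{\Theta_\alpha(H^1\times A)} . \]
The second equality uses the density of $H^1\times A$ in $S_0\times S_1$ together with continuity. Now evaluate on $(\varphi_0,(h,h^2-c,h^3-3ch))$. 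Here $R$ solves \eqref{eq:R}, and $\Phi:=R+h$ satisfies
\[ \partial_t\Phi=\Delta\Phi-\Phi^3+3c\Phi+\alpha\Phi+f \]
by the same algebra used to identify $\Psi_\alpha$ with $\mathcal J_\alpha$, so $\Phi=\mathcal J_\alpha(\varphi_0,f,c)$. It follows that
\[ \Theta_\alpha = R^2+2Rh+h^2-c=(R+h)^2-c=\mathcal J_\alpha(\varphi_0,f,c)^2-c, \]
which yields the claimed set.

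\textbf{Main obstacle.} The delicate step is the almost sure identification $\Theta_\alpha(a_\alpha(0),\mathbb Z)={:}a_\alpha^2{:}$ for the \emph{stationary} solution. This requires the Wick powers of $Z$ to be built with the same constants $C_\delta$ that appear in the equation, and $a_\alpha$ to be represented on $[0,T]$ through the flow started from $a_\alpha(0)$. I would handle it via the flow property and the approximation $\Phi_\delta\to\Phi$. Everything else is continuity, already supplied by Theorem~\ref{thm:continuity}.
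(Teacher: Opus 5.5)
Your proposal is essentially the paper's proof. The paper writes ${:}a_\alpha^2{:}=F(a_\alpha-Z,Z,Z^{:2:})$ with $F(u,v,w)=u^2+2uv+w$, applies Lemma~\ref{lem:supp} to $F$, and evaluates via $F(\mathcal J_\alpha(\varphi_0,f,c)-h,h,h^2-c)=\mathcal J_\alpha(\varphi_0,f,c)^2-c$; your $\Theta_\alpha=F\circ(\mathcal R,\pi_1,\pi_2)$ on $S_0\times S_1$ only makes explicit the composition with the remainder map that the paper leaves implicit, and the almost sure identification you flag is likewise just asserted in the paper. Your argument also uses Theorem~\ref{thm:continuity}(1) as stated, which cannot hold literally up to $t=0$ because $R(0)=a_\alpha(0)$ lies only in $\cC^{-\varepsilon}$; that weakness comes from the paper, not from your argument.
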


\begin{proof}

Consider the decomposition
\[
:a_\alpha^2:\;=\; (a_\alpha-Z)^2 + 2 (a_\alpha-Z)\,Z + Z^{:2:} = F(a_\alpha - Z, Z, Z^{:2:}),
\]
where $F(u,v,w) := u^2 + 2uv + w$.
By \eqref{pp}, $F$ defines a continuous map
\[
\cC^{2-\varepsilon} \times \cC^{-\varepsilon} \times \cC^{-\varepsilon} \to \cC^{-\varepsilon},
\]
and hence a continuous map on
\[
C_T\cC^{2-\varepsilon} \times (C_T\cC^{-\varepsilon})^2 \to C_T\cC^{-\varepsilon}.
\]
Applying Lemma~\ref{lem:supp} to the continuous map $F$ and noting that
\[ F(\mathcal{J}_\alpha(\varphi_0,f,c) - h, h, h^2-c) = \mathcal{J}_\alpha(\varphi_0,f,c)^2 - c, \]
we obtain the desired result.
\end{proof}

\begin{corollary} \label{cor:const}
  For any $\kappa \in \mathbb{R}$ the constant function with $\psi (t, x)
  \equiv \kappa$ for all $(t, x) \in [0, T] \times \mathbb{T}^2$ is in the
  support of $\mathbb{P}_{(: a_{\alpha}^2 (t) :)_{t \in [0, T]}}$.
\end{corollary}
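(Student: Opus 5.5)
By the preceding corollary, the support of $\mathbb{P}_{(:a_\alpha^2:(t))_{t\in[0,T]}}$ is the closure in $C_T\cC^{-\varepsilon}$ of the set
\[
\{\mathcal{J}_\alpha(\varphi_0,f,c)^2 - c : \varphi_0\in H^1,\ f\in L^2([0,T]\times\mathbb{T}^2),\ c\in[0,\infty)\}.
\]
It therefore suffices to exhibit, for every $\kappa\in\R$, a single triple $(\varphi_0,f,c)$ with $\mathcal{J}_\alpha(\varphi_0,f,c)^2-c\equiv\kappa$ on $[0,T]\times\mathbb{T}^2$. In fact I expect to hit the constant exactly, so no approximation argument is needed.

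The plan is to take $\Phi\equiv m$, a spatially and temporally constant function. We need $m^2-c=\kappa$ with $c\geq 0$, so we choose $m\geq 0$ with $m^2\geq \kappa$ and set $c:=m^2-\kappa$. For instance, $m:=\sqrt{|\kappa|}$ works: if $\kappa\geq0$ this gives $c=0$, and if $\kappa<0$ it gives $c=2|\kappa|$. Now set $\varphi_0:=m\in H^1(\mathbb{T}^2)$. The constant $\Phi\equiv m$ solves
\[
\partial_t\Phi=\Delta\Phi-\Phi^3+3c\Phi+\alpha\Phi+f
\]
exactly when
\[
f\equiv m^3-3cm-\alpha m,
\]
which is a constant and hence lies in $L^2([0,T]\times\mathbb{T}^2)$. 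We then have $\mathcal{J}_\alpha(\varphi_0,f,c)=m$ and $\mathcal{J}_\alpha(\varphi_0,f,c)^2-c=m^2-c=\kappa$.

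The only point requiring care is that $\mathcal{J}_\alpha(\varphi_0,f,c)$ is the unique solution of its equation, so that the constant $m$ is indeed the value of the map. Since the data are smooth, the equation is a classical deterministic cubic reaction–diffusion equation with a coercive nonlinearity, and uniqueness of smooth solutions is standard. It also follows from the well-posedness in Appendix~\ref{a1}, with $Z_1=h$, $Z_2=h^2-c$, $Z_3=h^3-3ch$ for the $h\in\mathcal H_0(T)$ corresponding to $f$. I do not expect any real obstacle here. The crucial structural feature is that $c$ ranges over all of $[0,\infty)$, which lets the square $m^2\geq0$ be shifted down to any real value. This freedom is precisely what the renormalization constant provides.
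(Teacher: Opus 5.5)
Your proof is correct and matches the paper's argument: both use the preceding corollary to reduce the claim to finding constant data $(\varphi_0,f,c)$ for which $\Phi\equiv\varphi_0$ is a stationary solution with $\varphi_0^2-c=\kappa$, and for $\kappa\ge 0$ your choice $\varphi_0=\sqrt{\kappa}$, $c=0$, $f=\kappa^{3/2}-\alpha\kappa^{1/2}$ is exactly the paper's. For $\kappa<0$ the paper uses the slightly simpler data $\varphi_0=0$, $f=0$, $c=-\kappa$, but your choice $m=\sqrt{|\kappa|}$, $c=2|\kappa|$, $f=-5|\kappa|^{3/2}-\alpha|\kappa|^{1/2}$ works equally well.
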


\begin{proof}
  Let $\kappa \in \mathbb R$. Note that spatial constant functions are in $H^1$, and constant space-time functions are in $L^2([0,T]\times \mathbb T^2)$. Therefore, it suffices to find constant functions $\varphi_0, f$ and $c \geqslant 0$
  such that the equation
  \[ \partial_t \Phi = \Delta \Phi - \Phi^3 + (3 c + \alpha) \Phi + f, \qquad
     \Phi (0) = \varphi_0, \]
  is constantly equal to $\varphi_0$ with $\varphi_0^2 - c = \kappa$.
  Thus, we require
  \[ - \varphi_0^3 + (3 c + \alpha) \varphi_0 + f = 0, \qquad \varphi_0^2 - c
     = \kappa . \]
  We resolve the second equation to $c = \varphi_0^2 - \kappa$, which for
  $\kappa \geqslant 0$ requires $| \varphi_0 | \geqslant \sqrt{\kappa}$ to
  guarantee $c \geqslant 0$. We plug this identity in the first equation and
  obtain
  \[ f = - 2 \varphi_0^3 - (\alpha - 3 \kappa) \varphi_0 . \]
  Thus, for $\kappa < 0$ we can take $\varphi_0 = 0$, $f =
  0$, and $c = - \kappa$. For $\kappa \geqslant 0$ we can take $\varphi_0 = \sqrt{\kappa}$, $c = 0$
  and $f = - 2 \kappa^{3 / 2} - \alpha \kappa^{1 / 2} + 3 \kappa^{3 / 2} =
  \kappa^{3 / 2} - \alpha \kappa^{1 / 2}$. In both cases, we obtain $\mathcal{J}_{\alpha} (0, 0, c)^2 - c \equiv \kappa$.
\end{proof}

\begin{lemma} \label{lem:sol}
  For $\kappa \in \mathbb{R}$ and $v_0 \in L^2 $, the solution
  $v$ to the linear equation
  \[ \partial_t v = \Delta v + (\alpha - 3 \kappa) v, \qquad v (0) = v_0 \]
  is given by $v (t) = e^{t (\alpha - 3 \kappa)} P_t v_0$, where $(P_t)_{t \geqslant 0}$ is the semigroup generated by $\cL$. In
  particular,
  \[ \sup_{\| v_0 \|_{L^2 } = 1} \| v (t) \|_{L^2
     } = e^{t (\alpha - 3 \kappa)} . \]
\end{lemma}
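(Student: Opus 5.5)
The plan is to verify the explicit formula directly and then compute the operator norm using the Fourier basis on $\mathbb{T}^2$. The semigroup $(P_t)$ generated by $\cL$ is the heat semigroup $P_t = e^{t\Delta}$.

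\textbf{Step 1: the formula for $v$.} Set $w(t) := e^{-t(\alpha-3\kappa)} v(t)$. Then
\[
\partial_t w = e^{-t(\alpha-3\kappa)}\bigl(\partial_t v - (\alpha - 3\kappa) v\bigr) = \Delta w, \qquad w(0) = v_0 .
\]
The heat equation has a unique mild solution in $C([0,T];L^2)$, so $w(t) = P_t v_0$. Hence $v(t) = e^{t(\alpha-3\kappa)} P_t v_0$. Equivalently, one can check that this candidate is a mild solution of the equation for $v$ and invoke uniqueness for the linear equation, which holds because $\alpha - 3\kappa$ is a bounded multiplier.

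\textbf{Step 2: the operator norm of $P_t$.} Expand $v_0 = \sum_{k\in\mathbb{Z}^2} \hat v_0(k) e_k$ with $e_k(x) = e^{2\pi i k\cdot x}$. Then
\[
P_t v_0 = \sum_{k\in\mathbb{Z}^2} e^{-4\pi^2 |k|^2 t}\, \hat v_0(k)\, e_k .
\]
By Parseval, $\|P_t v_0\|_{L^2} \le \|v_0\|_{L^2}$, since every multiplier $e^{-4\pi^2|k|^2 t}$ is at most $1$. Equality holds for the normalized constant function $v_0 \equiv 1$, because $|\mathbb{T}^2| = 1$ and the $k=0$ mode is invariant under $P_t$. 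Therefore $\|P_t\|_{L(L^2,L^2)} = 1$.

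\textbf{Step 3: conclusion.} Multiplying the bound from Step 2 by the scalar $e^{t(\alpha-3\kappa)}$ gives
\[
\sup_{\|v_0\|_{L^2}=1} \|v(t)\|_{L^2} = e^{t(\alpha-3\kappa)} .
\]
There is no real obstacle here. The only point needing care is identifying the zero Fourier mode as the maximizer, and confirming that the torus normalization makes the constant function $1$ a unit vector in $L^2$.
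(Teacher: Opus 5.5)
Your proposal is correct and follows essentially the same route as the paper. The paper likewise verifies the explicit formula directly and uses Parseval for $\|P_t\|_{L(L^2,L^2)}\le 1$, with equality attained at $v_0\equiv 1$. Your rescaling-plus-uniqueness argument and your explicit check of the torus normalization only spell out details the paper leaves implicit.
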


\begin{proof}
  The identity $v(t)=e^{t (\alpha - 3 \kappa)} P_t v_0$ follows by differentiating the right hand side. Since $P_t$ is a contraction in $L^2$ for each $t \geqslant 0$, which follows by Jensen's
  inequality or Parseval's identity for the Fourier transform, we obtain the
  upper bound
  \[ \| v (t) \|_{L^2} = \| e^{t (\alpha - 3 \kappa)} P_t v_0
     \|_{L^2 } \leqslant e^{t (\alpha - 3 \kappa)} \| v_0
     \|_{L^2 } . \]
  For the lower bound we take $v_0 \equiv 1$ for which $P_t v_0 = v_0$ and
  thus $v (t) = e^{t (\alpha - 3 \kappa)} v_0$.
\end{proof}

\begin{theorem}\label{thm:main}
  The support of the finite time Lyapunov exponent of \eqref{eq:phi} is the real line, that is, for any value of $\alpha \in \R$ and $T>0$:
  \[
  \tmop{supp} (\mathbb{P}_{\lambda_T}) =\mathbb{R}.
  \]
\end{theorem}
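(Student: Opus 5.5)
The plan is to write $\lambda_T$ as a continuous function of the path $(:a_\alpha^2:(t))_{t\in[0,T]}$ and then combine Lemma~\ref{lem:supp}, Corollary~\ref{cor:const} and Lemma~\ref{lem:sol}.

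Let $G:C_T\cC^{-\varepsilon}\to\R$ send a path $W$ to $\frac1T\log\|v^W(T)\|_{L(L^2,L^2)}$. Here $v^W$ solves
\[
\partial_t v=\Delta v+\alpha v-3Wv, \qquad v(0)=v_0,
\]
so that $\lambda_T=G(:a_\alpha^2:)$. Lemma~\ref{lem:supp} applies because $:a_\alpha^2:$ is a random variable on the closed separable subset supporting its law, and that subset is Polish. It then gives $\tmop{supp}(\P_{\lambda_T})=\overline{G(\tmop{supp}\P_{:a_\alpha^2:})}$. By Corollary~\ref{cor:const}, the support contains every constant path $\psi\equiv\kappa$. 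By Lemma~\ref{lem:sol}, $G(\kappa)=\alpha-3\kappa$, and as $\kappa$ ranges over $\R$ these values cover all of $\R$. Hence $\tmop{supp}(\P_{\lambda_T})\supseteq\R$, which is the claim.

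The main step is the continuity of $G$. I would handle it in two parts.

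\emph{Continuity of the propagator.} I would show that $W\mapsto v^W(T)\in L(L^2,L^2)$ is continuous, and in fact locally Lipschitz, in operator norm on $C_T\cC^{-\varepsilon}$. For $v_0\in L^2\subset\cC^{-1-\delta}$ I solve the mild equation
\[
v(t)=e^{\alpha t}P_tv_0-3\int_0^t e^{\alpha(t-s)}P_{t-s}(W(s)v(s))\,\mathd s
\]
in a weighted space $\sup_t t^{\gamma}\|v(t)\|_{\cC^{1-\varepsilon'}}$. The product $W v$ is well defined by \eqref{pp}, since $1-\varepsilon'-\varepsilon>0$. The Schauder estimate \eqref{semigroup} makes the singularities integrable. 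Linearity then gives a global solution by iterating on short intervals, with bounds uniform for $\|v_0\|_{L^2}\le1$. Taking the difference of two solutions with the same $v_0$ and applying the same estimates plus Gronwall gives
\[
\|v^W(T)-v^{W'}(T)\|_{L(L^2,\cC^{1-\varepsilon'})}\lesssim\|W-W'\|_{C_T\cC^{-\varepsilon}},
\]
locally uniformly. Since $\cC^{1-\varepsilon'}\hookrightarrow L^2$ on the torus, this is continuity in $L(L^2,L^2)$.

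\emph{Well-definedness of the logarithm.} I need $v^W(T)\neq0$. For this I would use backward uniqueness for linear parabolic equations, or more simply a lower bound $\|v^W(T)\mathbf 1\|_{L^2}>0$ obtained from continuity near $W$ together with a duality or positivity argument. The easiest route may be to avoid the issue: $G$ takes values in $[-\infty,\infty)$, and $\log$ is continuous as a map from $[0,\infty)$ to $[-\infty,\infty)$. Then $G$ is continuous into the extended line, and Lemma~\ref{lem:supp} still gives that each $\alpha-3\kappa$ lies in the support. Concretely, any neighbourhood $U$ of $\alpha-3\kappa$ has an open preimage $G^{-1}(U)$ containing the constant path $\kappa$. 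That preimage has positive $\P_{:a_\alpha^2:}$-measure because the constant path lies in the support, so $\P(\lambda_T\in U)>0$. Only this direct inclusion is needed, so the full equality in Lemma~\ref{lem:supp} is not required.

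I expect the main obstacle to be the precise regularity bookkeeping in the continuity estimate. Starting from $L^2$ data, which has regularity $-1$ in $\cC$-scale, the product with $W\in\cC^{-\varepsilon}$ only becomes defined after the smoothing near $t=0$. This requires choosing the time weight $\gamma<1$ so that the singular integral $\int_0^t (t-s)^{-(1+\varepsilon)/2}s^{-\gamma}\,\mathd s$ converges. Alternatively, I would work in $L^2$-based Besov spaces $B^{s}_{2,2}$ with the corresponding product estimate, which avoids the embedding loss.
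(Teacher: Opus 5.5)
Your proposal is correct and follows the paper's proof essentially step for step. The paper also writes $\lambda_T=\Lambda(:a_\alpha^2:)$ with $\Lambda(\psi)=\frac1T\log\|v^\psi(T)\|_{L(L^2,L^2)}$, applies Lemma~\ref{lem:supp}, and combines Corollary~\ref{cor:const} with Lemma~\ref{lem:sol} to get $\Lambda(\kappa)=\alpha-3\kappa$ for $\kappa=(\alpha-\lambda)/3$. Your differences are in detail only: the paper simply asserts that $\Lambda$ is continuous and tacitly assumes $\lambda_T$ is finite, and it uses the full equality of Lemma~\ref{lem:supp}. Your operator-norm estimate for the mild solution, your treatment of the logarithm, and your remark that only the easy inclusion $\varphi(\tmop{supp}\,\P_X)\subset\tmop{supp}\,\P_{\varphi(X)}$ is needed supply those points.
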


\begin{proof}
   Recall that the FTLE $\lambda_T$ is defined as
\[ \lambda_T = \frac{1}{T} \log \| v (T) \|_{L (L^2 , L^2
   )} \]
with
\[ \| v (T) \|_{L (L^2 , L^2 )} := \sup_{\| v_0 \|_{L^2 } = 1} \| v (T)
   \|_{L^2 } . \]
We consider the map
\[\Lambda:C_T\cC^{-\varepsilon} \to \mathbb{R}, \qquad \Lambda(\psi):=\frac{1}{T}\log\| v^\psi (T) \|_{L (L^2, L^2)},\]
where $v:=v^\psi$ solves $\partial_t v = \Delta v +(\alpha - 3\psi)v$ with $v(0)=v_0$. 
Applying Lemma~\ref{lem:supp} to this continuous map, we have
\[ \tmop{supp} (\mathbb{P}_{\lambda_T}) = \overline{\Lambda(\tmop{supp}(\mathbb{P}_{: a_{\alpha}^2 :}))}\]
Let $\lambda \in \mathbb{R}$ and let $\kappa = \frac{\alpha - \lambda}{3}$. 
By Corollary~\ref{cor:const}, the constant function with $\psi (t, x) \equiv \kappa$ for all $(t, x) \in [0, T] \times \mathbb{T}^2$ is in the support of $\mathbb{P}_{:a_{\alpha}^2:}$.
By Lemma~\ref{lem:sol}, 
\[\Lambda(\kappa)=\frac{1}{T}\log\| v^\kappa (T) \|_{L (L^2, L^2)} =\frac{1}{T}\log(e^{T(\alpha-3\kappa)}) =\lambda.\]
Therefore, $\lambda \in \tmop{supp} (\mathbb{P}_{\lambda_T})$. As $\lambda \in \mathbb{R}$ is arbitrary, we have $\mathbb{R} \subset \tmop{supp} (\mathbb{P}_{\lambda_T})$. Since $\lambda_T$ is a real-valued random variable, we have $\tmop{supp} (\mathbb{P}_{\lambda_T}) =\mathbb{R}$.
\end{proof}

\appendix

\section{Appendix}
\subsection{Proof of Theorem~\ref{thm:continuity}}\label{a1}

The following global well-posedness result for the nonlinear residual equation is well known, see for example \cite{DaPrato2003, Mourrat2017Dynamic, Gubinelli2019Global}. For completeness we include the arguments, based on the approach developed in \cite{Gubinelli2019Global, Jagannath2023}.
 The reminder $R=R^{\varphi_0,\mathcal Z}$ solves \eqref{eq:R}. The linear term $\alpha R$ can be handled with only minor modifications. For simplicity, we omit it in the statements and proofs of Theorems~\ref{thm:local} and~\ref{thm:global}, and focus on the following equation
\begin{align} \label{eqn_u}
\left\{
    \begin{aligned}
    &\mathcal{L}  R= - R^3 -3R^2Z_1-3RZ_2-Z_3 \text{,}\quad \text{on} \; \mathbb{R}_+ \times \mathbb{T}^2 \\
    &R(0) = \varphi_0
    \end{aligned}
\right..
\end{align}

\begin{theorem}[Local well-posedness]\label{thm:local}
For $\varphi_0 \in \mathcal{C}^{2-\varepsilon}$, there exists $T^* > 0 $ such that for all $T<T^*$ the equation~\eqref{eqn_u} has a unique solution $u \in C_T \mathcal{C}^{2-\varepsilon}$, and this solution depends continuously on $\mathcal Z=(Z_1,Z_2,Z_3) \in \cap_{\varepsilon' > 0}(C_T\mathcal{C}^{-\varepsilon'})^3$ and $\varphi_0 \in \mathcal{C}^{2-\varepsilon}$. Moreover, if $T^* < \infty$, then $\lim_{t\to T^*}\|R(t)\|_{\mathcal{C}^{2-\varepsilon}} = \infty$. 
\end{theorem}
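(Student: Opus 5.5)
The plan is a Banach fixed point argument for the mild formulation
\[
R(t) = P_t\varphi_0 + \int_0^t P_{t-s}\big(-R^3 - 3R^2 Z_1 - 3 R Z_2 - Z_3\big)(s)\,\mathd s =: \mathcal{M}(R)(t)
\]
on a ball $B_M := \{R \in C_T\cC^{2-\varepsilon} : \|R\|_{C_T\cC^{2-\varepsilon}} \le M\}$, with $M := 2\|\varphi_0\|_{\cC^{2-\varepsilon}} + 1$ and $T$ small.

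The first step is to control the nonlinearity. Take $R \in \cC^{2-\varepsilon}$ and $Z_i \in \cC^{-\varepsilon'}$ with $\varepsilon' < \varepsilon$ small enough that $2-\varepsilon-\varepsilon' > 0$. Then \eqref{pp} gives
\[
\|R^2 Z_1\|_{\cC^{-\varepsilon'}} \lesssim \|R\|_{\cC^{2-\varepsilon}}^2 \|Z_1\|_{\cC^{-\varepsilon'}}
\quad\text{and}\quad
\|R Z_2\|_{\cC^{-\varepsilon'}} \lesssim \|R\|_{\cC^{2-\varepsilon}} \|Z_2\|_{\cC^{-\varepsilon'}}.
\]
The cubic term $R^3$ lies in $\cC^{2-\varepsilon} \subset \cC^{-\varepsilon'}$. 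So the whole right-hand side is bounded in $C_T\cC^{-\varepsilon'}$ by a polynomial $C(1+\|R\|^3)(1+\|\mathcal Z\|)$.

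The second step is the Schauder estimate \eqref{semigroup} with gain $\delta = 2-\varepsilon+\varepsilon' < 2$:
\[
\Big\|\int_0^t P_{t-s} g(s)\,\mathd s\Big\|_{\cC^{2-\varepsilon}} \lesssim \int_0^t (t-s)^{-(2-\varepsilon+\varepsilon')/2}\,\mathd s\, \|g\|_{C_T\cC^{-\varepsilon'}} \lesssim T^{(\varepsilon-\varepsilon')/2}\|g\|_{C_T\cC^{-\varepsilon'}}.
\]
The exponent is integrable precisely because $\varepsilon' < \varepsilon$; this small positive power of $T$ is the whole source of contraction. For the free term, $\|P_t\varphi_0\|_{\cC^{2-\varepsilon}} \lesssim \|\varphi_0\|_{\cC^{2-\varepsilon}}$. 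Continuity in $t$ follows from strong continuity of $P_t$ on the closure of smooth functions. If that is delicate, I would replace $\cC^{2-\varepsilon}$ by the little Hölder space, or simply accept a slight regularity loss.

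The third step closes the argument. The multilinear structure gives the Lipschitz bound
\[
\|\mathcal M(R)-\mathcal M(\tilde R)\| \lesssim T^{(\varepsilon-\varepsilon')/2}\, C(M,\|\mathcal Z\|)\, \|R-\tilde R\|,
\]
so for $T$ small, depending only on $\|\varphi_0\|_{\cC^{2-\varepsilon}}$ and $\|\mathcal Z\|$, the map $\mathcal M$ is a contraction of $B_M$ into itself. Uniqueness in all of $C_T\cC^{2-\varepsilon}$ follows by the standard argument of restricting to a ball containing both solutions. Continuous dependence follows from the same estimates, since $\mathcal M$ is also Lipschitz in $(\varphi_0,\mathcal Z)$, locally uniformly, and the fixed point of a uniform contraction depends continuously on parameters.

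The step I expect to be the main obstacle is the blow-up alternative. I would obtain it by iteration. Define $T^*$ as the supremum of the times to which the solution extends. Since the local existence time depends only on an upper bound for $\|R(t_0)\|_{\cC^{2-\varepsilon}}$ and $\|\mathcal Z\|_{C_{T^*}}$, a bounded sequence $\|R(t_n)\|$ with $t_n \to T^*$ would allow a restart from $t_n$ with a uniform step length, extending the solution past $T^*$. This only gives $\limsup = \infty$. To upgrade to $\lim = \infty$, I would argue by contradiction. If $\liminf < \infty$, there are times $t_n \to T^*$ with $\|R(t_n)\| \le K$, and then the solution exists on $[t_n, t_n+\tau(K)]$, which contains $T^*$ for $n$ large. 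This is the same argument and yields the contradiction.
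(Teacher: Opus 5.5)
Your proposal is correct and follows the paper's proof. Both run a Banach fixed point for the mild formulation on a ball in $C_T\mathcal C^{2-\varepsilon}$, using the product estimate and the Schauder bound with a small power of $T$ (the paper takes your $\varepsilon'=\varepsilon/2$, giving $T^{\varepsilon/4}$), and then iterate to a maximal existence time. Your extra care fills in points the paper leaves implicit: the $+1$ in $M$ so that $\varphi_0=0$ is covered, the caveat about time-continuity of $P_t\varphi_0$ in $\mathcal C^{2-\varepsilon}$, and the explicit blow-up argument (whose first version already gives $\liminf=\infty$, not merely $\limsup=\infty$).
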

\begin{proof}
    Set up a Picard iteration
    \[
    \mathcal I(R)(t) = P_t \varphi_0 + \int^t_0 P_{t-s} (- R^3 -3R^2Z_1-3RZ_2-Z_3)(s)\text{d}s.
    \]
    For any $R_1$, $R_2 \in C_T\mathcal{C}^{2-\varepsilon}$, we have using \eqref{semigroup} for $\beta=-\varepsilon/2$ and $\delta=2-\varepsilon/2$
    \begin{align*}
    \|\mathcal I(R_1)-\mathcal I(R_2)\|_{2-\varepsilon} &\lesssim T^{\frac{\varepsilon}{4}}\|-( R_1^3-R_2^3) -3(R_1^2-R_2^2)Z_1-3(R_1-R_2)Z_2\|_{-\frac{\varepsilon}{2}}\\
    &\lesssim_{\mathcal Z} T^{\frac{\varepsilon}{4}}(1+\|R_1\|^2_{2-\varepsilon}+\|R_2\|^2_{2-\varepsilon})\|R_1-R_2\|_{2-\varepsilon}.
    \end{align*}
    Let $M = 2 \|\varphi_0\|_{\mathcal{C}^{2-\varepsilon}}$. For $T>0$ sufficiently small, depending only on $M$ and $\mathcal Z$, the map $\mathcal I$ is a contraction on $B_{C_T\mathcal{C}^{2-\varepsilon}}(0,M)$. By the Banach fixed point theorem, we can find a unique solution on the interval $[0,T]$, where $T$ depends on $M$ and $\mathcal Z$. We can iterate the construction and get a maximal existence time $T^* \in (0,\infty]$. The continuous dependence on the data follows from similar fixed point estimates. For $\mathcal{Z},\mathcal{\tilde{Z}}$, $\varphi_0,\tilde{\varphi}_0$ we consider the fixed points $R$ and $\tilde{R}$ of the maps $I_{\mathcal{Z},\varphi_0}$ respectively $I_{\mathcal{\tilde{Z}},\tilde{\varphi}_0}$ and write $R-\tilde{R}= I_{\mathcal{Z},\varphi_0}(R) -I_{\mathcal{\tilde{Z}}, \tilde{\varphi}_0}(\tilde{R})$ to bound $R-\tilde{R}$ in terms of $\mathcal{Z}-\tilde{\mathcal{Z}}$ respectively $\varphi_0 -\tilde{\varphi}_0$. 
\end{proof}

\begin{theorem}[Global well-posedness]\label{thm:global}
Let $R$ and $T^*$ be as in Theorem~\ref{thm:local} and let $T < T^* \wedge 1$. There exists a constant $C>0$ that depends only on $\varphi_0$ and $\mathcal Z=(Z_1,Z_2,Z_3)$ but not on $T$, such that
\[\|R\|_{2-\varepsilon} \leq C.\]
In particular, $T^* = \infty$.
\end{theorem}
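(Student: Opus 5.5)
We prove an a priori bound on $\|R\|_{2-\varepsilon}$ on $[0,T]$, $T<T^*\wedge 1$, using the damping from $-R^3$. Testing against $R^{p-1}$ gives $L^p$ bounds, and Schauder estimates then upgrade these to $\cC^{2-\varepsilon}$. The blow-up criterion of Theorem~\ref{thm:local} then forces $T^*=\infty$. Iterating on unit intervals is harmless, because the constant will depend on $\varphi_0$ only through norms that stay controlled at each restart.

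\emph{Step 1 (energy / $L^p$ estimate).} For even $p\ge 2$ we compute
\[
\frac{1}{p}\frac{\mathd}{\mathd t}\|R\|_{L^p}^p + (p-1)\int R^{p-2}|\nabla R|^2 + \int R^{p+2} = -\int R^{p-1}\bigl(3R^2Z_1+3RZ_2+Z_3\bigr).
\]
Each term on the right pairs a distribution $Z_i\in\cC^{-\varepsilon'}$ with a power of $R$. We bound it by duality as $\|Z_i\|_{\cC^{-\varepsilon'}}\|R^{k}\|_{B^{\varepsilon'}_{1,1}}$ with $k\in\{p+1,p,p-1\}$. Then we interpolate $\|R^k\|_{B^{\varepsilon'}_{1,1}}$ between $\|R^{k}\|_{L^1}$ and $\|\nabla (R^{k})\|_{L^1}$. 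The latter is controlled via Cauchy--Schwarz by $\bigl(\int R^{p-2}|\nabla R|^2\bigr)^{1/2}$ times a power of $\int R^{p+2}$; this is exactly the approach of \cite{Gubinelli2019Global, Jagannath2023}. Since $\varepsilon'$ is small, Young's inequality absorbs everything into the coercive terms on the left, up to a constant $C(\mathcal Z)$. This yields the differential inequality
\[
\frac{\mathd}{\mathd t}\|R\|_{L^p}^p + c\|R\|_{L^p}^{p+2}\le C(\mathcal Z).
\]
The superlinear damping then gives $\sup_{t\le T}\|R(t)\|_{L^p}\le C(p,\mathcal Z,\|\varphi_0\|_{L^p})$, uniformly in $T<T^*\wedge1$.

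\emph{Step 2 (bootstrap to $\cC^{2-\varepsilon}$).} We use the mild formulation together with \eqref{semigroup} and the embedding $L^p\hookrightarrow B^{-2/p}_{\infty,\infty}$. Choosing $p$ large, Step 1 gives a bound on $R$ in $C_T\cC^{-\delta}$ for small $\delta$. Next we bound the nonlinearity: by Besov-space analogues of \eqref{pp}, the terms $R^3$, $R^2Z_1$ and $RZ_2$ are controlled in $L^{p/3}$-based Besov spaces of regularity $-\varepsilon'$. Schauder estimates then lift $R$ to positive regularity, and we iterate. Each round gains almost two derivatives, and the time singularities are integrable because $T\le1$. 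After finitely many steps we reach $C_T\cC^{2-\varepsilon}$, provided $\varphi_0\in\cC^{2-\varepsilon}$ as assumed in Theorem~\ref{thm:local}.

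\emph{Main obstacle.} The delicate part is the absorption in Step 1, namely estimating $\int R^{p+1}Z_1$ with $Z_1$ only a distribution. We will use $\|R^{p+1}\|_{B^{\varepsilon'}_{1,1}}\lesssim \|R^{p+1}\|_{L^1}^{1-\varepsilon'}\|R^{p+1}\|_{W^{1,1}}^{\varepsilon'}$. We then bound $\|\nabla R^{p+1}\|_{L^1}\lesssim (\int R^{p-2}|\nabla R|^2)^{1/2}(\int R^{p+4})^{1/2}$. This last bound contains the higher moment $\int R^{p+4}$, which is not among the coercive terms, so we must check that the exponents still allow absorption. If that bookkeeping fails, the fallback is to split $R$ into a high-frequency part, handled by a Schauder estimate with small time factor, and a low-frequency part, handled by the energy method.
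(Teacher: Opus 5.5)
Your route differs from the paper's. The paper never tests the equation against powers of $R$. It splits $R=R_1+R_2$ with a frequency cutoff $n$ tied to the solution by $2^{-n(2-2\varepsilon)}(\|R\|_0\vee 1)\simeq 1$. The rough pieces (paraproducts against $\Delta_{>2n}Z_1$, $\Delta_{>n}Z_2$, and $Z_3$) go into a linear equation for $R_1$, handled by Schauder. The remainder $R_2$ solves $\mathcal L R_2=-R_2^3+U_2$ with bounded forcing $U_2$, and is controlled by the maximum principle $\|R_2\|_0\lesssim\|U_2\|_0^{1/3}$; interpolation and Young then close everything in the $\cC^\beta$ scale. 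This maximum-principle argument is what \cite{Gubinelli2019Global, Jagannath2023} actually do. Your $L^p$ energy method is instead the Da Prato--Debussche/Tsatsoulis--Weber route. It spares you paraproducts and the solution-dependent cutoff. In exchange you must justify the energy identity by regularizing $\mathcal Z$, work in $L^p$-based Besov spaces, and add a bootstrap back to $\cC^{2-\varepsilon}$.

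Step 1 is sound, and your ``main obstacle'' is an arithmetic slip. Since $|R|^p|\nabla R|=|R|^{(p-2)/2}|\nabla R|\cdot|R|^{(p+2)/2}$, Cauchy--Schwarz gives $\|\nabla R^{p+1}\|_{L^1}\le(p+1)A^{1/2}B^{1/2}$ with $A=\int R^{p-2}|\nabla R|^2$ and $B=\int R^{p+2}$; no $\int R^{p+4}$ appears. Using $\int|R|^{p+1}\le B^{(p+1)/(p+2)}$, the worst term is $A^{\varepsilon'/2}B^{\gamma}$ with $\gamma=(1-\varepsilon')\frac{p+1}{p+2}+\frac{\varepsilon'}{2}<1-\frac{\varepsilon'}{2}$. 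Young's inequality absorbs this into $A$ and $B$, so no fallback is needed.

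The genuine gap is at the start of Step 2. A bound on $R$ in $L^p$, or in $C_T\cC^{-\delta}$, does not control $R^2Z_1$ or $RZ_2$ in any Besov space. Products with $Z_i\in\cC^{-\varepsilon'}$ need the other factor to have regularity $>\varepsilon'$, as in \eqref{pp} and its $L^p$ analogues, so the iteration you describe cannot begin. The fix is to observe that, once $L^q$ norms are controlled, the nonlinearity is linear in a positive-regularity norm, for instance
\[
\|R^2Z_1\|_{B^{-\varepsilon'}_{q,\infty}}\lesssim\|R\|_{L^{2q}}\,\|R\|_{B^{2\varepsilon'}_{2q,\infty}}\,\|Z_1\|_{\cC^{-\varepsilon'}} .
\]
You then close by absorption, which is legitimate because $\sup_{t\le T}\|R(t)\|_{\cC^{2-\varepsilon}}<\infty$ for $T<T^*$. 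Interpolate $\|R\|_{B^{2\varepsilon'}_{2q,\infty}}\lesssim\|R\|_{L^{2q}}^{1-\theta}\|R\|_{B^{2-\varepsilon}_{q,\infty}}^{\theta}$ with $\theta$ small, using $B^{2-\varepsilon}_{q,\infty}\hookrightarrow B^{2-\varepsilon-1/q}_{2q,\infty}$. Insert this into the Schauder estimate in $C_TB^{2-\varepsilon}_{q,\infty}$ and absorb the power $\theta<1$ by Young; a singular Gronwall argument also works. The resulting bound in $C_TB^{2-\varepsilon}_{q,\infty}\hookrightarrow C_T\cC^{2-\varepsilon-2/q}$ gives $L^\infty$ and positive H\"older control. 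One further Schauder step in the $\cC^\beta$ scale then yields the $\cC^{2-\varepsilon}$ bound uniformly in $T<T^*\wedge1$, and the blow-up criterion gives $T^*=\infty$.
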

For the proof of Theorem~\ref{thm:global}, we need uniform bounds for the solution $R$ on time intervals of length $T$. The key idea is to decompose the solution into a more regular part and a singular part using a paraproduct decomposition, which allows us to separate the most irregular terms and and exploit their different regularities.\\
\textbf{Paraproduct Decomposition.}
\begin{align*}
    \mathcal{L} R_1 &=U_1 (R_1,R_2) \\
    \mathcal{L} R_2 &= - R_2^3 + U_2 (R_1,R_2) ,
\end{align*}
with initial conditions $R_1(0)= R(0) = \phi_0$ and $R_2(0) = 0$, where
\begin{align*}
    U_1 (R_1,R_2) &= -3R^2 \olessthan \Delta_{>2n}Z_1 - 3R \olessthan \Delta_{>n}Z_2 - Z_3\\
    U_2 (R_1,R_2) &=  -3R^2 \olessthan \Delta_{\leq 2n}Z_1 - 3R \olessthan \Delta_{\leq n}Z_2 - 3R^2 \succcurlyeq Z_1 - 3R \succcurlyeq Z_2 - (R^3 - R_2^3),
\end{align*}
for $R := R_1 + R_2$. In this way, $U_1$ collects singular terms, whereas $U_2$ is more regular. Note that since $R_1 + R_2$ solves the equation~\eqref{eqn_u}, by the uniqueness in Theorem~\ref{thm:local} we have $R_1 + R_2 = R$.
\begin{lemma} \label{lem:U}
    For any $\delta \geq 0$, $n\geq 1$, $\varepsilon'>0$, we have
    \begin{align*}
    \|U_1\|_{-\varepsilon-\delta} &\lesssim_\mathcal Z (1+2^{-n\delta}\|R_1 + R_2\|_0)^2. \\
     \|U_2\|_{-\varepsilon} &\lesssim_\mathcal Z \sum\limits^3_{i=1}\|R_1\|_0^i\cdot\|R_2\|_0^{3-i} + \|R_1 + R_2\|_{\varepsilon'}(1+\|R_1 + R_2\|_0).
    \end{align*}
    Furthermore, for any $\kappa > 0$, $n\geq 1$, $\varepsilon'>0$, 
    \begin{align*}
    \|U_2\|_0 &\lesssim_\mathcal Z \sum\limits^3_{i=1}\|R_1\|_0^i\cdot\|R_2\|_0^{3-i} + \|R_1 + R_2\|_{\varepsilon'}(1+\|R_1 + R_2\|_0)\\
    &\quad + 2^{n\kappa}\|R_1 + R_2\|_0 + 2^{2n\kappa}\|R_1 + R_2\|_0^2.
    \end{align*}
\end{lemma}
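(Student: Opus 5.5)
The plan is to estimate $U_1$ and $U_2$ term by term with the paraproduct and resonant product estimates. We use that $R=R_1+R_2$ and $\|R^2\|_0\le\|R\|_0^2$, together with the standard Littlewood--Paley bounds for the truncated noises:
\[
\|\Delta_{>m}Z_i\|_{\cC^{-\varepsilon-\delta}}\lesssim 2^{-m\delta}\|Z_i\|_{\cC^{-\varepsilon}},\qquad
\|\Delta_{\le m}Z_i\|_{L^\infty}\lesssim 2^{m\kappa}\|Z_i\|_{\cC^{-\kappa}}.
\]
The second bound uses $Z_i\in C_T\cC^{-\kappa}$ for every $\kappa>0$, as in Theorem~\ref{thm:local}.

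\emph{The bound for $U_1$.} For the paraproducts we use $\|f\olessthan g\|_{\beta}\lesssim\|f\|_{L^\infty}\|g\|_\beta$ for $\beta<0$. This gives
\[
\|R^2\olessthan\Delta_{>2n}Z_1\|_{-\varepsilon-\delta}\lesssim 2^{-2n\delta}\|R\|_0^2,\qquad
\|R\olessthan\Delta_{>n}Z_2\|_{-\varepsilon-\delta}\lesssim 2^{-n\delta}\|R\|_0.
\]
The term $Z_3$ is bounded by $\|Z_3\|_{-\varepsilon}$. Adding these three bounds yields $\lesssim(1+2^{-n\delta}\|R\|_0)^2$. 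The truncation levels $2n$ and $n$ were chosen precisely so that each factor of $\|R\|_0$ comes with a factor $2^{-n\delta}$.

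\emph{The bound for $U_2$ in $\cC^{-\varepsilon}$.} The cubic difference expands as
\[
R^3-R_2^3=\sum_{i=1}^3\binom3i R_1^iR_2^{3-i},
\]
which is bounded in $L^\infty\subset\cC^{-\varepsilon}$ by the stated sum. The low-frequency paraproducts are bounded in $\cC^{-\varepsilon}$ by $\|R\|_0^2+\|R\|_0$, exactly as above. The terms $R^2\succcurlyeq Z_1=R^2\varodot Z_1+Z_1\olessthan R^2$ and $R\succcurlyeq Z_2$ need positive regularity of $R$. We use $\|f\varodot g\|_{\varepsilon'-\varepsilon}\lesssim\|f\|_{\varepsilon'}\|g\|_{-\varepsilon}$ when $\varepsilon'>\varepsilon$, and $\|g\olessthan f\|_{\varepsilon'-\varepsilon}\lesssim\|g\|_{-\varepsilon}\|f\|_{\varepsilon'}$. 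We also need $\|R^2\|_{\varepsilon'}\lesssim\|R\|_{\varepsilon'}\|R\|_0$ for $\varepsilon'<1$. Together these bound the resonant terms by $\|R\|_{\varepsilon'}(1+\|R\|_0)$. Only $\varepsilon'>\varepsilon$ is needed here; for smaller $\varepsilon'$ one applies the same bound with $\varepsilon$ replaced by a smaller exponent, using $Z_i\in\cap_{\varepsilon'}C_T\cC^{-\varepsilon'}$. The low-frequency paraproducts can be absorbed into this term because $\|R\|_0\le\|R\|_{\varepsilon'}$.

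\emph{The bound for $U_2$ in $L^\infty$.} Here the low-frequency paraproducts are estimated in $L^\infty$ using $\|\Delta_{\le m}Z\|_{L^\infty}\lesssim 2^{m\kappa}$. This produces the terms $2^{2n\kappa}\|R\|_0^2$ and $2^{n\kappa}\|R\|_0$. The resonant pieces are placed in $\cC^{\varepsilon'-\kappa'}\subset L^\infty$ with $\kappa'<\varepsilon'$, as before.

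The main obstacle is the bookkeeping of the exponents. The $L^\infty$ bounds on the resonant terms require a strictly positive regularity gap, so $\varepsilon'$ must exceed the noise regularity exponent being used. This works because the noise lies in every $\cC^{-\kappa}$, and the implicit constants are allowed to depend on these norms of $\mathcal Z$.
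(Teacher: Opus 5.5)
Your proposal is correct and follows essentially the same route as the paper. You estimate $U_1$ term by term with $\|\Delta_{>m}Z\|_{-\varepsilon-\delta}\lesssim 2^{-m\delta}\|Z\|_{-\varepsilon}$. You place the $\succcurlyeq$ terms in $\cC^{\varepsilon'-\kappa'}\subset L^\infty$ using $Z_i\in C_T\cC^{-\kappa'}$ with $\kappa'<\varepsilon'$, which is more careful than the paper's borderline pairing of $\cC^{\varepsilon'}$ with $\cC^{-\varepsilon'}$. And you absorb the low-frequency terms via $\|R\|_0\lesssim\|R\|_{\varepsilon'}$.

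The one step to tighten is the $L^\infty$ bound on $R^2\olessthan\Delta_{\le 2n}Z_1$ and $R\olessthan\Delta_{\le n}Z_2$, because $\olessthan$ is not bounded from $L^\infty\times L^\infty$ to $L^\infty$. There are two easy fixes. You can sum the blocks directly:
$\|f\olessthan\Delta_{\le m}Z\|_{L^\infty}\le\sum_{j\le m+1}\|\Delta_{\le j-2}f\|_{L^\infty}\|\Delta_j\Delta_{\le m}Z\|_{L^\infty}\lesssim 2^{m\kappa}\|f\|_{L^\infty}\|Z\|_{-\kappa}$.
Or you can do as the paper does and bound these terms in $\cC^{\kappa-\kappa'}\subset L^\infty$, using $\|\Delta_{\le m}Z\|_{\kappa-\kappa'}\lesssim 2^{m\kappa}\|Z\|_{-\kappa'}$.
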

\begin{proof}
    Notice that for any $\beta \in \mathbb{R}$, $f \in \mathcal{C}^\beta$, and $\delta > 0$, we have
    \[
    \|\Delta_{>n}f\|_{\mathcal{C}^{\beta-\delta}} \lesssim 2^{-n\delta}\|f\|_{\mathcal{C}^\beta} \quad \text{and} \quad \|\Delta_{\leq n}f\|_{\mathcal{C}^{\beta+\delta}} \lesssim 2^{n\delta}\|f\|_{\mathcal{C}^\beta}.
    \]
    For $U_1 = -3R^2 \olessthan \Delta_{>2n}Z_1 - 3R \olessthan \Delta_{>n}Z_2 - Z_3$, using paraproduct estimates, we obtain
    \begin{align*}
    \|U_1\|_{-\varepsilon-\delta} &\lesssim \|R_1+R_2\|^2_0 \cdot \|\Delta_{>2n}Z_1\|_{-\varepsilon-\delta} + \|R_1+R_2\|_0 \cdot \|\Delta_{>n}Z_2\|_{-\varepsilon-\delta} + \|Z_3\|_{-\varepsilon-\delta}\\
    &\lesssim_\mathcal Z (1+2^{-n\delta}\|R_1 + R_2\|_0)^2.
    \end{align*}
    Now for the bound of $U_2$, since $Z_1$, $Z_2 \in C_T\mathcal{C}^{-\varepsilon'}$, we have 
    \[
    \|R^2 \succcurlyeq Z_1\|_0 + \|R \succcurlyeq Z_2\|_0 \lesssim_\mathcal Z \|R^2\|_{\varepsilon'} + \|R\|_{\varepsilon'} \lesssim \|R\|_{\varepsilon'} (1 + \|R\|_0).
    \]
    Then, as 
    \[\|R^2 \olessthan \Delta_{\leq 2n}Z_1\|_{-\varepsilon} + \|R \olessthan \Delta_{\leq n}Z_2\|_{-\varepsilon}\lesssim_\mathcal Z\|R^2\|_0+\|R\|_0,
    \]
    we have the bound $$\|U_2\|_{-\varepsilon} \lesssim_\mathcal Z \sum\limits^3_{i=1}\|R_1\|_0^i\cdot\|R_2\|_0^{3-i} + \|R_1 + R_2\|_{\varepsilon'}(1+\|R_1 + R_2\|_0). $$
    For $\kappa' \in (0,\varepsilon\wedge\kappa)$,
    \[
    \|R^2 \olessthan \Delta_{\leq 2n}Z_1\|_{-\kappa'+\kappa} + \|R \olessthan \Delta_{\leq n}Z_2\|_{-\kappa'+\kappa}\lesssim_\mathcal Z2^{2n\kappa}\|R^2\|_0+2^{n\kappa}\|R\|_0.
    \]
    Therefore, 
    \begin{align*}
    \|U_2\|_0 &\lesssim_\mathcal Z \sum\limits^3_{i=1}\|R_1\|_0^i\cdot\|R_2\|_0^{3-i} + \|R_1 + R_2\|_{\varepsilon'}(1+\|R_1 + R_2\|_0)\\
    &\quad + 2^{n\kappa}\|R_1 + R_2\|_0 + 2^{2n\kappa}\|R_1 + R_2\|_0^2.
    \end{align*}
\end{proof}

\begin{corollary} \label{cor:u}
    If $n \geq 0$ is such that $2^{-n(2-2\varepsilon)}(\|R_1+R_2\|_0\vee1)\simeq 1$, then for all $\beta \in[0,2-\varepsilon]$ we have
\begin{align}
    &\|R_1\|_\beta \lesssim_{\mathcal Z,\varphi_0} 1+\|R_2\|_0^{\frac{2\beta}{2-\varepsilon}} \label{C4_1}\\
    &\|R_2\|_0 \lesssim_{\mathcal Z,\varphi_0} 1+\|R_2\|_{\varepsilon'}^{\frac{1}{2}} \label{C4_2}\\
    &\|R_2\|_{2-\varepsilon} \lesssim_{\mathcal Z,\varphi_0} 1+\|R_2\|_0^3. \label{C4_3}
\end{align}
\end{corollary}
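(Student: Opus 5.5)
We prove the three bounds of Corollary~\ref{cor:u} from the mild formulations of the two equations, using the Schauder estimate \eqref{semigroup} together with Lemma~\ref{lem:U}. The index $n$ is tied to $\|R_1+R_2\|_0$ through $2^{n(2-2\varepsilon)}\simeq \|R_1+R_2\|_0\vee 1$. All constants may depend on $\mathcal Z$ and $\varphi_0$ but not on $T\le 1$.

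\textbf{Step 1: the bound \eqref{C4_1}.} We write $R_1(t)=P_t\varphi_0+\int_0^t P_{t-s}U_1(s)\,\mathd s$. Applying \eqref{semigroup} with regularity gain $\beta+\varepsilon+\delta<2$ and the first estimate of Lemma~\ref{lem:U}, we obtain
\[
\|R_1\|_\beta\lesssim 1+(1+2^{-n\delta}\|R\|_0)^2 .
\]
Here we set $R=R_1+R_2$; the integrand $(t-s)^{-(\beta+\varepsilon+\delta)/2}$ must be integrable, which is why we take $\delta$ slightly below $2-\beta-\varepsilon$. We choose $\delta=\delta(\beta)$ so that, with the above choice of $n$, the quantity $2^{-n\delta}\|R\|_0$ behaves like $\|R\|_0^{\beta/(2-\varepsilon)}$ up to constants and small losses absorbed into $\varepsilon$. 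This gives
\[
\|R_1\|_\beta\lesssim 1+\|R\|_0^{2\beta/(2-\varepsilon)} .
\]
At $\beta=0$ this yields $\|R_1\|_0\lesssim 1$ uniformly. Since $\|R\|_0\le \|R_1\|_0+\|R_2\|_0\lesssim 1+\|R_2\|_0$, we conclude \eqref{C4_1}. The exponent matching is the delicate bookkeeping here. If the exponents do not match exactly, the fallback is to replace $2\beta/(2-\varepsilon)$ by a slightly larger exponent that is still below $1$ for $\beta$ small, which suffices in the later steps.

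\textbf{Step 2: the bound \eqref{C4_2}.} This is the main obstacle, because it must exploit the damping term $-R_2^3$. We use a maximum principle argument for $\mathcal L R_2=-R_2^3+U_2$ with $R_2(0)=0$. The function $R_2$ is $C^{2-\varepsilon}$ in space, so it is regular enough to evaluate at a point where $|R_2(t,\cdot)|$ attains its maximum. Comparing with the ODE $\dot y=-y^3+\|U_2\|_0$, we get
\[
\|R_2\|_0\lesssim \|U_2\|_0^{1/3}.
\]
We then insert the $L^\infty$ bound of Lemma~\ref{lem:U} with $\kappa$ small. The terms $\|R_1\|_0^i\|R_2\|_0^{3-i}$ are at most $\|R_2\|_0^2$ times a constant, by Step 1. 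The term $\|R\|_{\varepsilon'}(1+\|R\|_0)$ is at most $(1+\|R_2\|_{\varepsilon'})(1+\|R_2\|_0)$, again using \eqref{C4_1} with $\beta=\varepsilon'$. The remaining terms involve $2^{2n\kappa}\|R\|_0^2$, which is at most $\|R\|_0^{2+O(\kappa)}$. Taking the cube root and using Young's inequality, we absorb every power of $\|R_2\|_0$ strictly below $3$ into the left-hand side. What survives is $\|R_2\|_0^{3}\lesssim 1+\|R_2\|_{\varepsilon'}\|R_2\|_0$ plus absorbable terms, and hence $\|R_2\|_0\lesssim 1+\|R_2\|_{\varepsilon'}^{1/2}$. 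For this absorption to work, $\kappa$ must be chosen small enough that $2+O(\kappa)<3$ after the powers are tracked.

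\textbf{Step 3: the bound \eqref{C4_3}.} We use the mild formulation of $R_2$. We apply \eqref{semigroup} to the right-hand side $-R_2^3+U_2$, placed in $\cC^{-\varepsilon}$, with the second estimate of Lemma~\ref{lem:U}. Using Step 1 for $R_1$, this gives
\[
\|R_2\|_{2-\varepsilon}\lesssim \|R_2\|_0^3+1+\|R_2\|_{\varepsilon'}(1+\|R_2\|_0).
\]
We interpolate $\|R_2\|_{\varepsilon'}\le \|R_2\|_0^{1-\theta}\|R_2\|_{2-\varepsilon}^{\theta}$ with $\theta=\varepsilon'/(2-\varepsilon)$ small. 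Young's inequality then lets us absorb the $\|R_2\|_{2-\varepsilon}$ contribution into the left-hand side, and the resulting power of $\|R_2\|_0$ stays at most $3$. This yields \eqref{C4_3}.
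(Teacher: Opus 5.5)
Your proposal is correct and follows essentially the paper's route. For $R_1$ you use Schauder plus Lemma~\ref{lem:U}; your choice $\delta=(2-2\varepsilon)(2-\varepsilon-\beta)/(2-\varepsilon)$ gives the exponent $2\beta/(2-\varepsilon)$ exactly, so no fallback is needed, whereas the paper takes $\delta=2-2\varepsilon$ and $\delta=0$ and interpolates. For $\|R_2\|_0$ you use the maximum principle $\|R_2\|_0\lesssim\|U_2\|_0^{1/3}$ with Young absorption for $\kappa<1-\varepsilon$. For $\|R_2\|_{2-\varepsilon}$ you use Schauder, interpolation and Young, interpolating directly instead of first invoking \eqref{C4_2}.

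The one thing to repair is that the bare bound \eqref{semigroup} leaves a non-integrable $(t-s)^{-1}$ when the gain is exactly $2$. This happens at the endpoint $\beta=2-\varepsilon$ of \eqref{C4_1}, which your $\delta(\beta)$ cannot reach since Lemma~\ref{lem:U} needs $\delta\ge 0$, and again in Step~3. There you need the full-gain Schauder estimate in $C_T\cC^{\beta}$ that the paper uses. Alternatively, estimate $U_1,U_2$ in $\cC^{-\varepsilon''}$ for some $\varepsilon''<\varepsilon$; the proof of Lemma~\ref{lem:U} allows this because $\mathcal Z\in\cap_{\varepsilon'>0}(C_T\cC^{-\varepsilon'})^3$.
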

\begin{proof}
    Recall that $R_1(0) = \varphi_0 \in \mathcal{C}^{2-\varepsilon}$. By Schauder estimate and Lemma~\ref{lem:U}, we obtain for any $\delta \in [0,2-\varepsilon)$
    \[
    \|R_1\|_{2-\varepsilon-\delta} \lesssim_{\mathcal Z,\varphi_0} 1 + \|U_1(R_1,R_2)\|_{-\varepsilon-\delta} \lesssim_\mathbb{Z} (1+2^{-n\delta}\|R_1 + R_2\|_0)^2.
    \]
    Notice that $n\geq1$ is chosen so that $2^{-n(2-2\varepsilon)}(\|R_1+R_2\|_0\vee1)\simeq 1$. Taking $\delta = 2 - 2\varepsilon$, we obtain the uniform bound $\|R_1\|_0 \lesssim_{\mathcal Z,\varphi_0} 1$. On the other hand, choosing $\delta =0$, we have $\|R_1\|_{2-\varepsilon} \lesssim_{\mathcal Z,\varphi_0} 1 +\|R_2\|_0^2$. The estimate for $R_1$, stated in~\eqref{C4_1}, is then followed by interpolation.
    
    To derive the estimate for $R_2$ , we apply the maximum principle (\cite[Lemma 3.6]{Jagannath2023}), which yields
    \[ \|R_2\|_0 \lesssim_{\mathcal Z,\varphi_0} \|U_2(R_1,R_2)\|_0^{\frac{1}{3}}.\]
    By Lemma~\ref{lem:U} and the estimate for $R_1$ in~\eqref{C4_1}, we obtain 
    \begin{align*}
    \|R_2\|_0 &\lesssim_{\mathcal Z,\varphi_0} 1+ \|R_2\|_0^{\frac{1}{3}} + \|R_2\|_0^{\frac{2}{3}} + \|R_1 + R_2\|_{\varepsilon'}^{\frac{1}{3}}(1+\|R_1 + R_2\|_0^{\frac{1}{3}})\\
    &\quad + 2^\frac{n\kappa}{3}\|R_1 + R_2\|_0^{\frac{1}{3}} + 2^\frac{2n\kappa}{3}\|R_1 + R_2\|_0^{\frac{2}{3}},\\
    &\lesssim_{\mathcal Z,\varphi_0} 1+ \|R_2\|_0^{\frac{1}{3}} + \|R_2\|_0^{\frac{2}{3}} + (\|R_2\|_{\varepsilon'} + \|R_2\|_0^{\frac{2\varepsilon'}{2-\varepsilon}})^{\frac{1}{3}}(1+\|R_2\|_0^{\frac{1}{3}})\\
    &\quad + 2^\frac{n\kappa}{3}\|R_2\|_0^{\frac{1}{3}} + 2^\frac{2n\kappa}{3}\|R_2\|_0^{\frac{2}{3}}.
    \end{align*}
    Applying Young's inequality, we further obtain
    \begin{align*}
    \|R_2\|_0 &\lesssim_{\mathcal Z,\varphi_0} 1+ \|R_2\|_{\varepsilon'}^{\frac{1}{3}}(1+\|R_2\|_0^{\frac{1}{3}}) + 2^\frac{2n\kappa}{3}\|R_2\|_0^{\frac{2}{3}}\\
    &\lesssim_{\mathcal Z,\varphi_0} 1+ \|R_2\|_{\varepsilon'}^{\frac{1}{3}}(1+\|R_2\|_0^{\frac{1}{3}}) + \|R_2\|_0^{\frac{2\kappa}{3(2-2\varepsilon)}+\frac{2}{3}}.
    \end{align*}
    Taking $\kappa \in (0,1-\varepsilon)$ and applying Young's inequality once more, we obtain the desired bound 
    \[ \|R_2\|_0 \lesssim_{\mathcal Z,\varphi_0} 1+ \|R_2\|_{\varepsilon'}^{\frac{1}{2}}\]
    as stated in~\eqref{C4_2}.
    
    To derive the estimate for $\|R_2\|_{2-\varepsilon}$, we apply Schauder estimate, Lemma~\ref{lem:U}, and the bounds~\eqref{C4_1} and~\eqref{C4_2}:
    \begin{align*}
    \|R_2\|_{2-\varepsilon}  &\lesssim_\mathcal Z \|-R_2^3 +U_2\|_{-\varepsilon} \lesssim_{\mathcal Z} \|R_2\|_0^3 +\|U_2\|_{-\varepsilon} \\
    &\lesssim_{\mathcal Z,\varphi_0} 1+ \|R_2\|_0^3 + (\|R_2\|_0^{\frac{2\varepsilon'}{2-\varepsilon}} + \|R_2\|_{\varepsilon'})(1+\|R_2\|_0)\\
    &\lesssim_{\mathcal Z,\varphi_0} 1+ \|R_2\|_0^3 + \|R_2\|_{\varepsilon'}^{\frac{3}{2}}.
    \end{align*}
    Finally, by interpolation,
    \[ \|R_2\|_{\varepsilon'}^{\frac{3}{2}} \lesssim (\|R_2\|_{2-\varepsilon}^{\frac{\varepsilon'}{2-\varepsilon}}\|R_2\|_0^{1-\frac{\varepsilon'}{2-\varepsilon}})^{\frac{3}{2}}, \]
    and applying Young’s inequality yields the bound
    \[ \|R_2\|_{2-\varepsilon} \lesssim_{\mathcal Z,\varphi_0} 1+\|R_2\|_0^3 \]
    as stated in~\eqref{C4_3}.    
\end{proof}

\begin{proof}[Proof of Theorem~\ref{thm:global}]
    By Corollary~\ref{cor:u}, we have the bounds
    \[ \|R_1\|_{2-\varepsilon} \lesssim_{\mathcal Z,\varphi_0} 1+\|R_2\|_0^2 \quad \text{and} \quad \|R_2\|_{2-\varepsilon} \lesssim_{\mathcal Z,\varphi_0} 1+\|R_2\|_0^3. \]
    Therefore, it remains to prove that $\|R_2\|_0 \lesssim_{\mathcal Z,\varphi_0} 1$.
    By interpolation,
    \[ \|R_2\|_{\varepsilon'} \lesssim \|R_2\|_{2-\varepsilon}^{\frac{\varepsilon'}{2-\varepsilon}}\|R_2\|_0^{1-\frac{\varepsilon'}{2-\varepsilon}}. \]
    Thus, 
    \begin{align*}
        \|R_2\|_0 &\lesssim_{\mathcal Z,\varphi_0} 1 + \|R_2\|_{\varepsilon'}^{\frac{1}{2}}\\
        &\lesssim_{\mathcal Z,\varphi_0} 1 + \|R_2\|_{2-\varepsilon}^{\frac{1}{2}\cdot\frac{\varepsilon'}{2-\varepsilon}}\|R_2\|_0^{\frac{1}{2}(1-\frac{\varepsilon'}{2-\varepsilon})}\\
        &\lesssim 1 + \|R_2\|_{2-\varepsilon}^{\frac{\varepsilon'}{2-\varepsilon}} + \|R_2\|_0^{1-\frac{\varepsilon'}{2-\varepsilon}}\\
        &\lesssim_{\mathcal Z,\varphi_0} 1 + \|R_2\|_0^{\frac{3\varepsilon'}{2-\varepsilon}} + \|R_2\|_0^{1-\frac{\varepsilon'}{2-\varepsilon}}.
    \end{align*}
    For $\varepsilon$ and $\varepsilon'$ sufficiently small, Young's inequality implies
    \[ \|R_2\|_0 \lesssim_{\mathcal Z,\varphi_0} 1. \]
    Therefore, we have established the desired bounds for $R_1$ and $R_2$. In particular, we obtain $\|R\|_{2-\varepsilon} \leq C$ for any $T < T^* \wedge 1$, which implies $T^* > 1$. By iterating this argument, we conclude that $T^* = \infty$.
\end{proof}

We can now prove Theorem \ref{thm:continuity}.

\begin{proof}[Proof of Theorem \ref{thm:continuity}]
Theorems~\ref{thm:local} and \ref{thm:global} establish the local and global well-posedness and continuous dependence for the simplified equation \eqref{eqn_u}. The full equation \eqref{eq:R} only differs by the additional linear term $\alpha R^{\varphi_0,\mathcal Z}$, which can be readily included in the Picard iteration and the a priori bounds in Theorems~\ref{thm:local} and \ref{thm:global}. Thus, the continuous dependence established in Theorems~\ref{thm:local} and \ref{thm:global} holds for the remainder map $\mathcal R$, and the solution $R^{\varphi_0,\mathcal Z}$ is bounded in $C_T\cC^{2-\varepsilon}$. Moreover, the projection $(\varphi_0,(Z_1,Z_2,Z_3)) \mapsto Z_1$ is continuous from $\cC^{2-\varepsilon} \times (C_T\cC^{-\varepsilon})^3$ into $C_T \cC^{-\varepsilon}$. Since $\cC^{2-\varepsilon} \hookrightarrow \cC^{-\varepsilon}$, the map $\mathcal R$ is also continuous from $\cC^{-\varepsilon} \times (C_T \cC^{-\varepsilon})^3$ into $C_T\cC^{-\varepsilon}$. Therefore, the map $\Psi_\alpha$ is continuous from $\cC^{-\varepsilon} \times (C_T\cC^{-\varepsilon})^3$ into $C_T \cC^{-\varepsilon}$.
\end{proof}

\subsection{Proof of Lemma~\ref{lem:supp}}\label{a2}

\begin{proof}

If $x \in \tmop{supp}(\mathbb{P}_X)$, then every neighbourhood $U$ of $x$ has $\mathbb{P}_X(U) > 0$. For any neighbourhood $V$ of $\varphi(x)$, the preimage $\varphi^{-1}(V)$ is a neighbourhood of $x$, so $\mathbb{P}_{\varphi(X)}(V) = \mathbb{P}_X(\varphi^{-1}(V)) > 0$. Thus, $\varphi(x) \in \tmop{supp}(\mathbb{P}_{\varphi(X)})$. This shows
\[ \varphi(\tmop{supp}(\mathbb{P}_X)) \subset \tmop{supp}(\mathbb{P}_{\varphi(X)}), \]
and therefore
\[ \overline{\varphi(\tmop{supp}(\mathbb{P}_X))} \subset \tmop{supp}(\mathbb{P}_{\varphi(X)}). \]
For the converse inclusion, let $y \in \tmop{supp}(\mathbb{P}_{\varphi(X)})$. Let  $V_n:=B(y,\tfrac1n)$ be the open ball with radius $\tfrac1n$ around $y$. Since $y \in \tmop{supp}(\mathbb{P}_{\varphi(X)})$, we have $\mathbb{P}_{\varphi(X)}(V_n)>0$ for all $n$. Thus, $\mathbb{P}_X(\varphi^{-1}(V_n)) = \mathbb{P}_{\varphi(X)}(V_n)>0$. By inner regularity of $\mathbb{P}_X$, for each $n$, there exists a compact set $K_n \subset \varphi^{-1}(V_n)$ such that $\mathbb{P}_X(K_n) > 0$. Since $\mathbb{P}_X(K_n) > 0$ and $K_n$ is compact, the support of $\mathbb{P}_X |_{K_n}:= \mathbb{P}_X(\cdot \cap K_n)$ is nonempty and $\tmop{supp}(\mathbb{P}_X |_{K_n}) \subset K_n \subset \varphi^{-1}(V_n)$. Therefore, there exists $x_n \in \tmop{supp}(\mathbb{P}_X |_{K_n}) \subset \varphi^{-1}(V_n)$. Thus, we have $\varphi(x_n) \in V_n \cap \varphi(\tmop{supp}(\mathbb{P}_X))$. As $V_n = B(y,\tfrac1n)$, we have $\varphi(x_n) \to y$, and therefore $y \in \overline{\varphi(\tmop{supp}(\mathbb{P}_X))}$. We have thus shown that \[\tmop{supp}(\mathbb{P}_{\varphi(X)}) \subset \overline{\varphi(\tmop{supp}(\mathbb{P}_X))}.\]
\end{proof}

\bibliographystyle{alpha}
	\bibliography{all}

\end{document}